\documentclass[11pt]{amsart}
\usepackage[utf8]{inputenc}
\usepackage{amssymb,bbold}
\usepackage{tikz}
\usepackage[left = 1.3in, right = 1.3in, bottom = 1.25in]{geometry}
\usepackage{latexsym,amsmath,amssymb,amscd}
\usepackage{enumerate}
\usepackage{comment}
\newtheorem{theorem}{Theorem}
\newtheorem{lemma}[theorem]{Lemma}
\newtheorem{remark}[theorem]{Remark}
\newtheorem{proposition}[theorem]{Proposition}
\newtheorem{corollary}[theorem]{Corollary}
\theoremstyle{definition}
\newtheorem{definition}{Definition}

\newcommand{\id}{{\rm id}}
\newcommand{\ad}{{\rm ad}}

\newcommand{\Ad}{{\rm Ad}}

\newcommand{\res}{{\rm res}}

\newcommand{\Tr}{{\rm Tr}\,}
\newcommand{\Hc}{{\mathcal H}}
\newcommand{\ug}{{\mathfrak u}}

\begin{document}

\title[The Restricted Schatten-class Grassmannian $\mathrm{Gr}_{\mathrm{res}, p}(\mathcal{H})$ as affine coadjoint orbit]{The Restricted Schatten-class Grassmaniann $\mathrm {Gr}_{\res, p}(\mathcal{H})$ as affine coadjoint orbit}
\author[A.~Tahiri]{Amin Tahiri}
\address{University of Utrecht}
\email{amin-t78@hotmail.com}
\author[A.B.~Tumpach]{Alice Barbora Tumpach}
\address{UMR CNRS 8524\\
UFR de Math\'ematiques\\
Laboratoire Paul Painlev\'e\\
59 655 Villeneuve d'Ascq Cedex\\
France\\ \& \\ Institut CNRS Pauli\\ UMI  CNRS 2842\\ Oskar-Morgenstern-Platz,1 \\1090 Wien\\Austria}
\email{alice-barbora.tumpach@univ-lille.fr}
\thanks{}

\begin{abstract}
In this paper, we consider the restricted $p$-Schatten class Grassmannian $\mathrm {Gr}_{\res, p}(\mathcal{H})$ consisting of infinite-dimensional and infinite codimensional subspaces $W$ of a polarized complex separable Hilbert space $\mathcal{H} = \mathcal{H}_+\oplus \mathcal{H}_-$ such that the orthogonal projection from $W$ onto $\mathcal{H}_+$ is Fredholm and the orthogonal projection from $W$ onto $\mathcal{H}_-$ is in the Schatten ideal $L_p$, $p\geq 1$.  The aim of this paper is to show that, for $1\leq p\leq 2$, the restricted $p$-Schatten class Grassmannian $\mathrm {Gr}_{\res, p}(\mathcal{H})$ is an affine (co-)adjoint orbit of an infinite-dimensional restricted unitary group $\operatorname{U}_{\res, p}(\mathcal{H})$, and that it admits natural weak symplectic structures.  These results follow from the fact that the Lie algebra of the restricted $p$-Schatten class unitary group $\operatorname{U}_{\res, p}(\mathcal{H})$ admits a non-trivial $2$-cocycle.
\end{abstract}
\subjclass{}
\keywords{Poisson structure, weak symplectic manifold, Lie--Poisson space, Unitary group.}

\maketitle
\tableofcontents
\newpage

\section{Introduction}

\subsection{Notation} In this paper, we consider a complex separable Hilbert space $\mathcal{H}$ decomposed into the sum of two infinite-dimensional closed orthogonal subspaces $\mathcal{H}_+$ and $\mathcal{H}_-$, i.e. one has an orthogonal decomposition $\mathcal{H} = \mathcal{H}_+\oplus \mathcal{H}_-$. We will denote by $\mathcal B(\mathcal{H}_1, \mathcal{H}_2)$ the Banach space of bounded linear operators from a Hilbert space $\mathcal{H}_1$ into a Hilbert space $\mathcal{H}_2$, and simply by $\mathcal B(\mathcal{H}_1)$ the Banach space of bounded linear operator from $\mathcal{H}_1$ into itself. 
The Banach Lie group of bounded unitary operators on $\mathcal{H}$ will be denoted by $\operatorname{U}(\mathcal{H}),$ 
\[\operatorname{U}(\mathcal{H})=\{u\in \mathcal B(\mathcal{H})\mid u^*u=uu^*=\id_\mathcal{H}\}, \]
where $\id_\mathcal{H}$ denotes the identity operator on  $\mathcal{H}$.
The Banach Lie algebra of $\operatorname{U}(\mathcal{H})$ consisting of skew-hermitian bounded operators will be denoted by $\mathfrak{u}(\mathcal{H}),$
\[\mathfrak{u}(\mathcal{H})=\{a\in \mathcal B(\mathcal{H})\mid a^*=-a\}.\]

For two Hilbert spaces $\mathcal{H}_1, \mathcal{H}_2$ and $1\leq p< +\infty$, the Schatten class ideal $L_p(\mathcal
{H}_1, \mathcal{H}_2)$ is the Banach space of bounded linear operators $A$ such that
\[
\Tr(A^*A)^{\frac{p}{2}}< +\infty
\]
endowed with the norm $\|A\|_p := \left(\Tr(A^*A)^{\frac{p}{2}}\right)^{\frac{1}{p}}.$ In particular, $L_1(\mathcal{H})$ will denote the Banach space of trace class operators, $L_2(\mathcal{H})$ the Hilbert space of Hilbert-Schmidt operators. Moreover $L_\infty(\mathcal H)$ will denote the Banach space of compact operators endowed with the operator norm. 
For all the Banach Lie algebras used in the  present paper, the Lie bracket is the commutator of operators.

Denote by $\mathrm{pr}_{\pm}\colon \mathcal H\to \mathcal H_\pm$ the orthogonal projections onto $\mathcal{H}_{\pm}$ and define the skew-Hermitian operator  $d = i(\mathrm{pr}_+-\mathrm{pr}_-)$.
For $p\geq 1$, the \textbf{restricted Banach algebra} $L_{\mathrm{res},p}(\mathcal H)$ is the Banach algebra
   \[
        L_{\mathrm{res},p}(\mathcal H)   = \{A\in \mathcal B(\mathcal{H})\mid [d,A]\in L_p(\mathcal{H})\}
        \]
endowed with the norm $\|A\|_{\mathrm{res},p} = \Vert A\Vert_{\infty}
           +\Vert[d,A]\Vert_p$. Similarly, for $q\geq 1$, we define $L_{1,q}(\mathcal H)$ to be the Banach  space
      \begin{align*}
        L_{1,q}(\mathcal H) &  = \{\rho\in \mathcal B(\mathcal{H})\mid [d,\rho]\in L_q(\mathcal{H}), \mathrm{pr}_{\pm}\rho|_{\mathcal{H}_{\pm}}\in L_1(\mathcal{H}_{\pm})\},
      \end{align*}
     equipped with the norm 
     \begin{equation*}
        \|\rho\|_{1,q} = \|\rho_{++}\|_{1} +\|\rho_{--}\|_{1} + \|\rho_{+-}\|_{q} +\|\rho_{-+}\|_{q},    
     \end{equation*}
     where 
     \begin{equation*}
         \rho = \begin{pmatrix}
            \rho_{++} & \rho_{+-}\\
            \rho_{-+} & \rho_{--}
        \end{pmatrix}
     \end{equation*}
     denotes the block decomposition of $\rho$ with respect  to the Hilbert sum $\mathcal{H} = \mathcal{H}_+\oplus \mathcal{H}_-$. The \textbf{restricted trace} (see also \cite{GO-grass}) of $\rho$ is defined as 
\begin{equation*}
    \mathrm{Tr_{res}}(\rho) = \mathrm{Tr}(\rho_{++})+ \mathrm{Tr}(\rho_{--}).
\end{equation*}
Note that the block diagonal operators of $\rho\in L_{1,q}(\mathcal H)$ are trace class.

The restricted unitary group $\mathrm U_{\res, p}(\mathcal{H})$ is defined as follows:
$$
\begin{array}{l}
\operatorname{U}_{\res, p}(\mathcal{H})=\{u\in \operatorname{U}(\mathcal{H})\mid[d,u]\in L_p(\mathcal{H})\}
     =\operatorname{U}(\mathcal{H})\cap L_{\res, p}(\mathcal{H}).
\end{array}
$$
    By the Algebraic Subgroup Theorem \cite[Theorem 1]{harris1977linear}, it admits a natural Banach Lie group structure with Lie algebra 
$$
\mathfrak{u}_{\res, p}=\{a\in\mathfrak{u}(\mathcal{H})\mid[d,a]\in L_p(\mathcal{H})\}
     =\mathfrak{u}(\mathcal{H})\cap L_{\res, p}(\mathcal{H}).
$$
In Section~\ref{sec:duality}, we will show  that a predual to the Banach Lie algebra $\mathfrak{u}_{\res, p}$ is the Banach space:
$$
\mathfrak{u}_{1,q}=\{\rho\in\mathfrak{u}(\mathcal{H}) \mid
[d,\rho]\in L_q(\mathcal{H}),\;
\mathrm{pr}_{\pm}\rho|_{\mathcal{H}_{\pm}}\in L_1(\mathcal{H}_{\pm})\},
$$
with $\frac{1}{p}+\frac{1}{q} = 1$ and $q = 1$ for $p = \infty$.

We will use the notation $\mathfrak{b}^*$ to denote the continuous dual of a Banach space $\mathfrak{b}$, i.e. the Banach space of continuous functionals on $\mathfrak{b}$, and $\mathfrak{b}_*$ for a predual of $\mathfrak{b}$, i.e. for a Banach space such that $(\mathfrak{b}_*)^* = \mathfrak{b}$.

\subsection{Aim of the paper} Set $1< p\leq 2$ and $q$ is defined by $\frac{1}{p} + \frac{1}{q} = 1$ and $q = \infty$ for $p = 1$. The aim of this paper is to show that the restricted $p$--Schatten class Grassmannian can be identified with a family of affine coadjoint orbits $\mathcal{O}_{(0, \gamma)}\subset \mathfrak{u}_{1,q}$ of the unitary group $\operatorname{U}_{\res, p}(\mathcal{H})$ and is naturally endowed with (a family of) symplectic structure(s). The affine action is defined using the non-trivial $2$-cocycle 
\begin{equation}
s(A, B) := \mathrm{Tr_{res}} (A[d, B]),
\end{equation}
$A, B \in \mathfrak{u}_{\res, p}$,
which is the restriction of the Schwinger cocycle to $\mathfrak u_{\mathrm{res},p}\subseteq \mathfrak u_{\mathrm{res},2}$ (cf. \cite{BRT07}). Note that for $1<  p\leq  2$, $\mathrm{Tr_{res}} (d[A, B])$ does not make sense in general  since the diagonal blocks of $[A, B] \in \mathfrak{u}_{\res, p}$ are in general not trace class.

\subsection{Related work} The notion of Banach Poisson manifold was introduced in \cite{{OR03}} and generalized to different contexts in \cite{BGT,beltita-odzijewicz,pelletier,debievre15,NST14,pelletier19,T19}. A comparison of various notions of infinite-dimensional manifolds can be found in \cite{GRT}.  The notion of affine coadjoint orbits can be found in \cite{Nee1}. In \cite{BRT07}, it was proved that the restricted  Grassmannian $\mathrm {Gr}_{\res, 2}(\mathcal{H})$ is an affine coadjoint orbit of the restricted unitary group $\mathrm U_{\res, 2}(\mathcal{H})$.
The notion of Banach Poisson--Lie group was introduced in \cite{T19} and used to shed new light on the relation of the restricted Grassmannian and the Korteweg-de Vries hierarchy. Other completely integrable systems in relation to the restricted Grassmannian were investigated in \cite{GO-grass, GT23, GT24}. Some more formal approaches to infinite dimensional Poisson--Lie groups can be found e.g. in \cite{grabowski94,khesin95,zakharevich94}. The geometry of the unitary groups was studied e.g. in \cite{andruchow10,beltita21,grabowski05}, and it was shown in \cite{GT22} that the unitary group of an Hilbert space is a Banach Poisson--Lie group. For Banach Poisson--Lie groups in duality, the link between the orbits of dressing actions and symplectic leaves is investigated in \cite{TT26}.


\section{The restricted $p$-Schatten class Grassmannian}
In the literature, the restricted Grassmannian refers usually either to the Schatten class Grassmannian $\mathrm{Gr}_{\mathrm{res},2}(\mathcal H)$ modeled on Hilbert-Schmidt operators \cite{segal85}, or to the Grassmannian   $\mathrm{Gr}_{\mathrm{res},\infty}(\mathcal H)$ modeled on compact operators \cite{segal-wilson}. In the present paper, we are mainly interested in the restricted $p$-Schatten class Grassmannian $\mathrm{Gr}_{\mathrm{res},p}(\mathcal H)$ for $1\leq  p \leq 2$ \cite{golinski2025note, stevenson2010}.

 \subsection{Definition of the restricted Grassmannian $\mathrm{Gr}_{\mathrm{res},p}(\mathcal H)$}
\begin{definition}\label{restricted schatten class grassmannian}
     The \textbf{restricted $p$-Schatten class Grassmannian} $\mathrm{Gr}_{\mathrm{res},p}(\mathcal H)$ is defined as the set of all closed subspaces $V\subseteq \mathcal H$ such that 
    \begin{enumerate}
        \item the orthogonal projection $\mathrm{pr}_+\vert_V\colon V\to \mathcal H_+$ is Fredholm (i.e. has finite index);
        \item the orthogonal projection $\mathrm{pr}_-\vert_V\colon V\to \mathcal H_-$ is of class $L_p$.
    \end{enumerate}
\end{definition}
An alternative way of stating Definition \ref{restricted schatten class grassmannian} is as follows. A subspace $V$ belongs to $\mathrm{Gr}_{\mathrm{res},p}(\mathcal H)$ if it is the image of an injective operator $v\colon \mathcal H_+\to \mathcal H$ such $v_+ :=\mathrm{pr}_+\circ v$ is Fredholm and $v_- :=\mathrm{pr}_-\circ v$ is of class $L_p$.

\subsection{$\mathrm{Gr}_{\mathrm{res},p}(\mathcal H)$ as homogeneous space of $\operatorname{U}_{\res, p}(\mathcal{H})$}

We begin with a preliminary lemma that will be used in proof of Proposition \ref{homogeneous}.

\begin{lemma}\label{Fredholm minus identity in L_p}
    Let $A\colon \mathcal H\to \mathcal H$ be Fredholm and $B\in \mathcal B(\mathcal H)$ such that $BA = \mathrm{id}_\mathcal H + P$ with $P\in L_p(\mathcal H)$. Then $AB = \mathrm{id}_{\mathcal H} + Q$ with $Q\in L_p(\mathcal H)$.
\end{lemma}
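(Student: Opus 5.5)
The plan is to use a Fredholm parametrix of $A$ together with the fact that $L_p(\mathcal H)$ is a two-sided ideal in $\mathcal B(\mathcal H)$.

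Since $A$ is Fredholm, it has a parametrix $C\in\mathcal B(\mathcal H)$ with
\[
CA=\mathrm{id}_{\mathcal H}-F_1,\qquad AC=\mathrm{id}_{\mathcal H}-F_2,
\]
where $F_1,F_2$ are finite rank. Concretely, $F_1$ is the orthogonal projection onto $\ker A$ and $F_2$ is the projection onto a complement of $\operatorname{ran}A$. Finite rank operators lie in every $L_p(\mathcal H)$.

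First I show that $B$ agrees with $C$ modulo $L_p$. Multiply $BA=\mathrm{id}_{\mathcal H}+P$ on the right by $C$:
\[
BAC=B-BF_2, \qquad\text{and also}\qquad BAC=C+PC .
\]
Hence
\[
B=C+PC+BF_2=C+R, \qquad R:=PC+BF_2 .
\]
Here $PC\in L_p(\mathcal H)$ by the ideal property, and $BF_2$ is finite rank. Therefore $R\in L_p(\mathcal H)$.

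Next I compute $AB$ directly:
\[
AB=AC+AR=\mathrm{id}_{\mathcal H}-F_2+AR .
\]
Set $Q:=-F_2+AR$. Then $AR\in L_p(\mathcal H)$, again by the ideal property, and $F_2$ is finite rank. So $Q\in L_p(\mathcal H)$, which is the claim.

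The only point needing care is the existence of the parametrix with finite-rank, hence $L_p$, errors. This is the standard construction: invert $A$ between $(\ker A)^\perp$ and $\operatorname{ran}A$, which is closed with finite codimension, and extend by zero on $(\operatorname{ran}A)^\perp$. The rest is purely algebraic, and the argument works for every $1\le p\le\infty$.
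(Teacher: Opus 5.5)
Your proposal is correct and follows essentially the same argument as the paper. The paper also builds a parametrix $T$ by inverting $A$ on a complement of $\ker A$ and extending by zero, right-multiplies $BA=\mathrm{id}_{\mathcal H}+P$ by $T$ to get $B=T-BF+PT$, and then left-multiplies by $A$ to obtain $AB-\mathrm{id}_{\mathcal H}=F-ABF+APT\in L_p(\mathcal H)$.
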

\begin{proof}
    Since $A$ is Fredholm its range is closed, its kernel is finite dimensional, and its range is of finite codimension (\cite[XI. Theorem 3.1]{conway}). Choose closed complements $\mathcal H= \ker (A)\oplus N$ and $\mathcal H = \mathrm{Ran}(A)\oplus M$ with $\dim(M)<\infty$. Define $T$ to be the inverse of $A\vert_{N}\colon N\to \mathrm{Ran}(A)$ on $\mathrm{Ran}(A)$, and $0$ on $M$. Then $T$ is a bounded operator such that $TA-\mathrm{id}_{\mathcal H}$ and $AT-\mathrm{id}_{\mathcal H}$ are finite-rank operators. 

    Multiplying $BA= \mathrm{id}_{\mathcal H}+ P$ from the right by $T$ we obtain $BAT = (\mathrm{id}_{\mathcal H} + P)T$.
    Since $AT = \mathrm{id}_{\mathcal H} + F$ for $F$ a finite rank operator, this is equal to $B(\mathrm{id}_{\mathcal H} + F) = T+PT$, therefore
    \begin{equation*}
        B = T -BF +PT.
    \end{equation*}
    Multiplying from the left by $A$ yields,
    \begin{equation*}
        AB = AT -ABF +APT.
    \end{equation*}
    Using, again, that $AT = \mathrm{id}_{\mathcal H} + F$, we get 
    \begin{equation*}
        AB  =\mathrm{id}_{\mathcal H} +  F - ABF + APT.
    \end{equation*}
    The operators $F$ and $ABF$ are of finite rank, and $APT$ is of class $L_p$. It follows that $AB - \mathrm{id}_{\mathcal H} \in L_p(\mathcal H)$, as claimed. 
\end{proof}

\begin{proposition}\label{homogeneous}
    For $1< p \leq 2$, the Banach Lie group $\operatorname{U}_{\mathrm{res},p}(\mathcal H)$ acts transitively on the restricted $p$-Schatten class Grassmannian, and the stabilizer of $\mathcal H_+\in \mathrm{Gr}_{\mathrm{res},p}(\mathcal H)$ is $ \operatorname{U}(\mathcal H_+)\times  \operatorname{U}(\mathcal H_-)$.
\end{proposition}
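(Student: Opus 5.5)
Three things must be verified: the action is well defined, it is transitive, and the stabilizer of $\mathcal H_+$ is as claimed.

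\textbf{The action is well defined.} Write $u\in\operatorname{U}_{\mathrm{res},p}(\mathcal H)$ in block form $u=\begin{pmatrix} u_{++}&u_{+-}\\ u_{-+}&u_{--}\end{pmatrix}$. The condition $[d,u]\in L_p$ says exactly that $u_{+-},u_{-+}\in L_p$. Since $u$ is unitary, $u_{++}^*u_{++}=\mathrm{id}-u_{-+}^*u_{-+}$ and $u_{++}u_{++}^*=\mathrm{id}-u_{+-}u_{+-}^*$. These are compact perturbations of the identity, so $u_{++}$ is Fredholm, and likewise $u_{--}$.

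Now let $V=\mathrm{Ran}(v)$ with $v\colon\mathcal H_+\to\mathcal H$ injective, $v_+$ Fredholm and $v_-\in L_p$. Then $uV=\mathrm{Ran}(uv)$ and
\[(uv)_+=u_{++}v_++u_{+-}v_-, \qquad (uv)_-=u_{-+}v_++u_{--}v_-.\]
The first is a Fredholm operator plus an $L_p$ perturbation, hence Fredholm; the second is in $L_p$ because $L_p$ is an ideal.

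I would also check that the description of the Grassmannian by injective $v$ matches Definition~\ref{restricted schatten class grassmannian}. Choosing $v$ to be an isometry onto $V$, the projection $\mathrm{pr}_\pm|_V$ corresponds to $v_\pm$ composed with a unitary, so the two descriptions agree.

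\textbf{Transitivity.} Given $V\in\mathrm{Gr}_{\mathrm{res},p}(\mathcal H)$, I need $u\in\operatorname{U}_{\mathrm{res},p}$ with $u\mathcal H_+=V$. This is the main obstacle.

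Let $w\colon\mathcal H_+\to\mathcal H$ be a unitary onto $V$, and $w'\colon\mathcal H_-\to\mathcal H$ a unitary onto $V^\perp$; both exist because $V$ and $V^\perp$ should be infinite dimensional. Set $u=(w\;\;w')$, which is unitary with $u\mathcal H_+=V$. We have $u_{-+}=w_-\in L_p$, so it remains to show $u_{+-}=\mathrm{pr}_+w'\in L_p$.

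Here I would use the orthogonal projection $P_V$. Its off-diagonal block is $\mathrm{pr}_-P_V\mathrm{pr}_+=w_-w_+^*\in L_p$. Since $P_V$ is self-adjoint, the other off-diagonal block $\mathrm{pr}_+P_V\mathrm{pr}_-=(w_-w_+^*)^*$ is also in $L_p$. Hence $[d,P_V]\in L_p$, and so $P_{V^\perp}=\mathrm{id}-P_V$ has off-diagonal blocks in $L_p$. Then
\[\mathrm{pr}_+P_{V^\perp}\mathrm{pr}_-=w'_+{w'_-}^*\in L_p.\]
The goal is to upgrade this to $w'_+\in L_p$. For that I need ${w'_-}^*$ to be invertible up to a finite-rank correction, i.e.\ $w'_-=\mathrm{pr}_-|_{V^\perp}$ Fredholm. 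This is the standard fact that for $V$ in the Grassmannian, $V^\perp$ projects Fredholmly onto $\mathcal H_-$. I would derive it from the relation ${w'_-}^*w'_-=\mathrm{id}-{w'_+}^*w'_+$ together with the block structure of $P_V$: one has $\mathrm{pr}_-P_V\mathrm{pr}_-=w_-w_-^*\in L_p$, so $\mathrm{pr}_-P_{V^\perp}\mathrm{pr}_-=w'_-{w'_-}^*$ is the identity minus an $L_p$ operator, hence Fredholm.

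Taking a Fredholm parametrix $T$ of ${w'_-}^*$, in the manner of Lemma~\ref{Fredholm minus identity in L_p}, gives
\[w'_+=w'_+{w'_-}^*T+\text{finite rank}\in L_p.\]
Infinite dimensionality of $V^\perp$ follows from the same Fredholmness, since $\mathcal H_-$ is infinite dimensional.

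\textbf{Stabilizer.} Suppose $u\mathcal H_+=\mathcal H_+$. Because $u$ is unitary, it also preserves $\mathcal H_-=\mathcal H_+^\perp$, so $u$ is block diagonal with unitary blocks. Conversely, every element of $\operatorname{U}(\mathcal H_+)\times\operatorname{U}(\mathcal H_-)$ commutes with $d$, so it lies in $\operatorname{U}_{\mathrm{res},p}$ and fixes $\mathcal H_+$.
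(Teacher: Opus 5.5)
Your well-definedness and stabilizer arguments are fine. The transitivity step, however, has a genuine gap where you conclude that $w'_-=\mathrm{pr}_-|_{V^\perp}$ is Fredholm.

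What you actually show is that $w'_-{w'_-}^*=\mathrm{id}_{\mathcal H_-}-w_-w_-^*$ is Fredholm. This yields only half of the claim: $\ker{w'_-}^*$ is finite-dimensional, equivalently $\mathrm{Ran}(w'_-)$ is closed of finite codimension. It gives no control on $\ker w'_-$, and that is exactly what the parametrix step needs. Writing $w'_+=w'_+{w'_-}^*T+\text{finite rank}$ requires a \emph{right} parametrix of ${w'_-}^*$, i.e.\ $\mathrm{Ran}({w'_-}^*)$ of finite codimension, i.e.\ $\dim\ker w'_-<\infty$. The other identity you invoke, ${w'_-}^*w'_-=\mathrm{id}-{w'_+}^*w'_+$, would supply this only if you already knew $w'_+$ is compact, which is the conclusion you are after.

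The telltale sign is that your transitivity argument never uses the hypothesis that $\mathrm{pr}_+|_V$ is Fredholm, only $w_-\in L_p$. Without that hypothesis the conclusion is false. Take $V\subset\mathcal H_+$ closed, infinite-dimensional and of infinite codimension in $\mathcal H_+$. Then $w_-=0$, $w'_-{w'_-}^*=\mathrm{id}_{\mathcal H_-}$ and $w'_+{w'_-}^*=0$, so every intermediate statement you prove holds. Yet $\ker w'_-\cong\mathcal H_+\ominus V$ is infinite-dimensional, and $w'_+$ is an infinite-rank partial isometry, not in $L_p$.

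The repair is short. For $x\in\mathcal H_+$ one has $x\perp V$ if and only if $x\perp\mathrm{pr}_+(V)$. Hence $\ker w'_-\cong V^\perp\cap\mathcal H_+=\mathcal H_+\ominus\mathrm{pr}_+(V)$, which is finite-dimensional precisely because $\mathrm{pr}_+|_V$ is Fredholm. Combined with what you proved, $w'_-$ (hence ${w'_-}^*$) is Fredholm, and your parametrix argument goes through.

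The paper sidesteps $w'_-$ entirely and uses the Fredholm hypothesis on $w_+$ directly. Since $w_+^*w_+=\mathrm{id}_{\mathcal H_+}-w_-^*w_-$, Lemma~\ref{Fredholm minus identity in L_p} gives $w_+w_+^*=\mathrm{id}_{\mathcal H_+}+P$ with $P\in L_p$. Multiplying the orthogonality relation $w_+^*w'_++w_-^*w'_-=0$ on the left by $w_+$ then yields $w'_+=-Pw'_+-w_+w_-^*w'_-\in L_p$.
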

\begin{proof}
 Let $W\in \mathrm{Gr}_{\mathrm{res},p}(\mathcal H)$, we will show that there exists an operator $A\in \mathrm {U}_{\mathrm{res},p}(\mathcal H)$ such that $A(\mathcal H_+) = W$. Let $w\colon \mathcal H_+\to \mathcal H$ be an isometry with image $W$, and $w^\bot\colon \mathcal H_-\to \mathcal H$ an isometry with image $W^\bot$. Then 
    \begin{equation*}
        A:=w\oplus w^\bot\colon \mathcal H_+\oplus \mathcal H_-\to \mathcal H_+\oplus \mathcal H_-
    \end{equation*}
    is a unitary transformation such that $A(\mathcal H_+) = W$. Write 
    \begin{equation*}
        A = \begin{pmatrix}
            w_+&w^\bot_+\\ w_- &w_-^\bot
        \end{pmatrix},
    \end{equation*}
    by definition $w_+$ is Fredholm and $w_-$ is of class $L_p$. Moreover, since $A$ is invertible, it follows that $w_-^\perp$ is Fredholm with $\mathrm{ind}(w^\perp_-)= - \mathrm{ind}(w_+)$. The unitary condition $A^*A = \mathrm{id}_\mathcal H$ implies
    \begin{equation*}
        w^*_+w_+ +w^*_-w_- = \mathrm{id}_{\mathcal H_+},\ \text{ and }\ w^*_+w^\bot_+ +w^*_- w^\bot_- = 0.
    \end{equation*} 
    From Lemma~\ref{Fredholm minus identity in L_p}, it follows that $w_+w^*_+ - \mathrm{id}_{\mathcal H_+} = P$, for an operator $P\in L_p(\mathcal H_+) $. Consequently, by the above equation, 
    \begin{equation*}
        w_+^\perp = - Pw_+^\perp - w_+w_-^*w_-^\perp.
    \end{equation*}
        We conclude that $A\in \mathrm U_{\mathrm{res},p}(\mathcal H)$. The assertion about the stabilizer is obvious.
\end{proof}

\subsection{$\mathrm{Gr}_{\mathrm{res},p}(\mathcal H)$ as a set of orthogonal projectors}
To a closed subspace $V\in \mathrm{Gr}_{\mathrm{res},p}(\mathcal H)$ we associate an orthogonal projection $\mathrm{pr}_V$ onto $V$. 
\begin{proposition}\label{projection grassmannian}
    Let $V\subseteq \mathcal H$  be a closed subspace. Then $V\in \mathrm{Gr}_{\mathrm{res},p}(\mathcal H)$ if and only if $\mathrm{pr}_V- \mathrm{pr}_+ \in L_p$. 
\end{proposition}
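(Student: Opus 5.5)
We prove both implications by writing $\mathrm{pr}_V = ww^*$, where $w\colon \mathcal H_+\to\mathcal H$ is an isometry with image $V$ (for $V$ infinite-dimensional), and comparing block decompositions. Set $w_\pm = \mathrm{pr}_\pm w$. Then
\begin{equation*}
\mathrm{pr}_V-\mathrm{pr}_+ = \begin{pmatrix} w_+w_+^*-\mathrm{id}_{\mathcal H_+} & w_+w_-^*\\ w_-w_+^* & w_-w_-^*\end{pmatrix}.
\end{equation*}

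\emph{($\Rightarrow$)} Let $V\in \mathrm{Gr}_{\mathrm{res},p}(\mathcal H)$. Then $w_-\in L_p$, so $w_+w_-^*$, $w_-w_+^*$ and $w_-w_-^*$ all lie in $L_p$, since $L_p$ is an ideal. For the $(++)$ block we argue as in the proof of Proposition~\ref{homogeneous}. Because $w$ is an isometry, $w_+^*w_+ = \mathrm{id}_{\mathcal H_+} - w_-^*w_-$, which is $\mathrm{id}+L_p$. Since $w_+$ is Fredholm, Lemma~\ref{Fredholm minus identity in L_p} (with $A=w_+$, $B=w_+^*$) gives $w_+w_+^*-\mathrm{id}_{\mathcal H_+}\in L_p$. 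Hence $\mathrm{pr}_V-\mathrm{pr}_+\in L_p$.

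\emph{($\Leftarrow$)} Suppose $\mathrm{pr}_V-\mathrm{pr}_+ =: K\in L_p$. For the $L_p$ condition, note that $\mathrm{pr}_-|_V = \mathrm{pr}_-\mathrm{pr}_V|_V = \mathrm{pr}_-(\mathrm{pr}_V-\mathrm{pr}_+)|_V$, because $\mathrm{pr}_-\mathrm{pr}_+=0$. This is a restriction of an $L_p$ operator, so it is in $L_p$. For the Fredholm condition, $K$ is compact, and $\mathrm{pr}_+|_V = \mathrm{pr}_V|_V - K|_V = \mathrm{id}_V - K|_V$ viewed as a map into $\mathcal H$. To obtain a Fredholm map $V\to\mathcal H_+$, I would show that $T:=\mathrm{pr}_+\mathrm{pr}_V\colon V\to \mathcal H_+$ has a two-sided parametrix. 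The natural candidate is $S:=\mathrm{pr}_V\mathrm{pr}_+\colon \mathcal H_+\to V$. Then
\begin{equation*}
ST = \mathrm{pr}_V\mathrm{pr}_+\mathrm{pr}_V|_V = \mathrm{pr}_V(\mathrm{pr}_V - K)\mathrm{pr}_V|_V = \mathrm{id}_V - \mathrm{pr}_VK|_V,
\end{equation*}
and symmetrically $TS = \mathrm{id}_{\mathcal H_+} + \mathrm{pr}_+K|_{\mathcal H_+}$. Both differ from the identity by compact operators, so Atkinson's theorem makes $T$ Fredholm.

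It remains to check that $V$ is infinite-dimensional and infinite-codimensional, so that the isometry $w\colon\mathcal H_+\to V$ exists. This follows because $\mathrm{pr}_V$ and $\mathrm{pr}_+$ differ by a compact operator and $\mathcal H_\pm$ are infinite-dimensional. If $\dim V<\infty$, then $\mathrm{pr}_+$ would be compact, which is impossible; the codimension case is handled the same way using $\mathrm{id}-\mathrm{pr}_V$.

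The argument is essentially routine. The only delicate step is the $(++)$ block in the forward direction, and Lemma~\ref{Fredholm minus identity in L_p} handles it.
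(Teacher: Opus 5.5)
Your proof is correct, and it takes a more self-contained route than the paper in both directions.

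\emph{Forward direction.} The paper uses the transitivity in Proposition~\ref{homogeneous} to write $V=u(\mathcal H_+)$ with $u\in\mathrm U_{\mathrm{res},p}(\mathcal H)$. It then reads off $\mathrm{pr}_V-\mathrm{pr}_+=[u,\mathrm{pr}_+]u^*\in L_p$ from the off-diagonal blocks of $u$. You instead expand $\mathrm{pr}_V=ww^*$ in blocks and apply Lemma~\ref{Fredholm minus identity in L_p} directly to the $(++)$ block. The paper's proof of transitivity rests on exactly this application of the lemma to $w_+$, so your argument is the same mechanism without the group-theoretic packaging. You need neither the complementary isometry $w^\perp$ onto $V^\perp$ nor Proposition~\ref{homogeneous}. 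That proposition is stated only for $1<p\le 2$, whereas your argument works for every $p\ge 1$.

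\emph{Reverse direction.} The $L_p$ condition is handled exactly as in the paper. For the Fredholm condition, the paper cites a result on Fredholm pairs (Proposition~5.2.4 of the reference). You exhibit the explicit parametrix $S=\mathrm{pr}_V|_{\mathcal H_+}$ and invoke Atkinson's theorem. This makes the argument elementary and shows that only compactness of $\mathrm{pr}_V-\mathrm{pr}_+$ is needed there.

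The paper's proof is shorter given its earlier results; yours is independent of them.

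\emph{Presentation.} In the forward direction, say explicitly that $w_+=\mathrm{pr}_+|_V\circ w$ is Fredholm because $w\colon\mathcal H_+\to V$ is unitary. Also say there that $V$ is infinite-dimensional because $\mathrm{pr}_+|_V$ has finite-codimensional range in $\mathcal H_+$. Your closing check of infinite dimension and codimension is placed after the reverse direction, which never uses $w$. Its justification, that $\mathrm{pr}_V-\mathrm{pr}_+$ is compact, would be circular if used for the forward direction.
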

\begin{proof}
    Let $V\in \mathrm{Gr}_{\mathrm{res},p}(\mathcal H)$, since the group $\operatorname{U}_{\mathrm{res},p}(\mathcal H)$ acts transitively on $\mathrm{Gr}_{\mathrm{res},p}(\mathcal H)$ we have that $V = u(\mathcal H_+)$ for some $u\in \mathrm U_{\mathrm{res},p}(\mathcal H)$. It follows that 
\begin{equation*}
    \mathrm{pr}_V - \mathrm{pr}_+ = u\mathrm{pr}_+ u^* - \mathrm{pr}_+= [u,\mathrm{pr}_+]u^* \in L_p
\end{equation*}
as the off-diagonal blocks of $u$ are of class $L_p$. For the converse, suppose that $\mathrm{pr}_V- \mathrm{pr}_+\in L_p(\mathcal H)\subseteq L_\infty(\mathcal H)$. Then, by Proposition 5.2.4 in \cite{spectral2023}, the pair $(\mathrm{pr}_V, \mathrm{pr}_+)$ forms a Fredholm pair. It follows that $\mathrm{pr}_+\vert_V$ is Fredholm. Moreover, 
\begin{equation*}
    \mathrm{pr}_-\mathrm{pr}_V = \mathrm{pr}_-(\mathrm{pr}_V- \mathrm{pr}_+)\in L_p(\mathcal H).
\end{equation*}
This concludes the proof.
\end{proof}

\begin{proposition}\label{dual_Grassmannian}
    If $W\in \mathrm{Gr}_{\mathrm{res},p}(\mathcal H)$ then $W^{\perp}$ belongs to the dual Grassmannian $\mathrm{Gr}'_{\mathrm{res},p}(\mathcal H)$ defined as the set of all closed subspaces $W'$ such that
    \begin{enumerate}
        \item the orthogonal projection $\mathrm{pr}_-\vert_{W'}\colon W'\to \mathcal H_-$ is Fredholm (i.e. has finite index);
        \item the orthogonal projection $\mathrm{pr}_+\vert_{W'}\colon W'\to \mathcal H_+$ is of class $L_p$.
    \end{enumerate}
\end{proposition}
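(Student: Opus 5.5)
The plan is to work with the unitary $A = w\oplus w^\perp$ from the proof of Proposition~\ref{homogeneous}, or with orthogonal projections via Proposition~\ref{projection grassmannian}; the projection route is the more symmetric of the two.

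\textbf{Step 1: reduce to projections.} Since $W\in \mathrm{Gr}_{\mathrm{res},p}(\mathcal H)$, Proposition~\ref{projection grassmannian} gives $\mathrm{pr}_W-\mathrm{pr}_+\in L_p(\mathcal H)$. Now $\mathrm{pr}_{W^\perp}=\mathrm{id}_{\mathcal H}-\mathrm{pr}_W$ and $\mathrm{pr}_-=\mathrm{id}_{\mathcal H}-\mathrm{pr}_+$. Hence
\[
\mathrm{pr}_{W^\perp}-\mathrm{pr}_- = -(\mathrm{pr}_W-\mathrm{pr}_+)\in L_p(\mathcal H).
\]

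\textbf{Step 2: the $L_p$ condition.} We have
\[
\mathrm{pr}_+\mathrm{pr}_{W^\perp}=\mathrm{pr}_+(\mathrm{pr}_{W^\perp}-\mathrm{pr}_-)\in L_p(\mathcal H).
\]
Restricting to $W^\perp$ shows that $\mathrm{pr}_+\vert_{W^\perp}$ is of class $L_p$. This is condition (2).

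\textbf{Step 3: the Fredholm condition.} Because $\mathrm{pr}_{W^\perp}-\mathrm{pr}_-$ is compact, the result of \cite{spectral2023} (Proposition 5.2.4) used in the proof of Proposition~\ref{projection grassmannian} says that $(\mathrm{pr}_{W^\perp},\mathrm{pr}_-)$ is a Fredholm pair. Therefore $\mathrm{pr}_-\vert_{W^\perp}\colon W^\perp\to\mathcal H_-$ is Fredholm, which is condition (1).

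\textbf{Alternative route.} One could instead read everything off the block form of $A$. There $w^\perp_-$ is Fredholm (shown in the proof of Proposition~\ref{homogeneous}), and $w^\perp_+\in L_p$ because $A\in \mathrm U_{\mathrm{res},p}(\mathcal H)$. Since $w^\perp$ is an isometry onto $W^\perp$, the maps $\mathrm{pr}_\pm\vert_{W^\perp}$ equal $w^\perp_\pm\circ (w^\perp)^{-1}$. This yields the same conclusion, with Fredholmness and the $L_p$ property preserved under composition with the unitary $(w^\perp)^{-1}\colon W^\perp\to\mathcal H_-$.

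The only delicate point is the Fredholm-pair step, and it is exactly the symmetric version of the argument already carried out in Proposition~\ref{projection grassmannian}, with the roles of $\pm$ swapped. I therefore expect no real obstacle. The remark about the index would be $\mathrm{ind}(\mathrm{pr}_-\vert_{W^\perp})=-\mathrm{ind}(\mathrm{pr}_+\vert_W)$, from the block computation, but the statement does not require it.
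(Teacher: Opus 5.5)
Your proof is correct and follows the paper's own argument. The paper likewise computes $\mathrm{pr}_{W^\perp}-\mathrm{pr}_-=-(\mathrm{pr}_W-\mathrm{pr}_+)\in L_p(\mathcal H)$ and stops there, implicitly relying on the converse direction of Proposition~\ref{projection grassmannian} with the roles of $\pm$ swapped; your Steps~2 and~3 (the identity $\mathrm{pr}_+\mathrm{pr}_{W^\perp}=\mathrm{pr}_+(\mathrm{pr}_{W^\perp}-\mathrm{pr}_-)$ and the Fredholm-pair argument) simply write out that swapped converse, which the paper leaves implicit.
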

\begin{proof}
    Let $W\in \mathrm{Gr}_{\mathrm{res},p}(\mathcal H)$, then, by definition, $\mathrm{pr}_W - \mathrm{pr}_+\in L_p(\mathcal H)$. The orthogonal projection onto $W^\perp$ is given by $\mathrm{pr}_{W^\perp} = \mathrm{id}_{\mathcal H} - \mathrm{pr}_W$. It follows that 
    \begin{equation*}
        \mathrm{pr}_{W^\perp} - \mathrm{pr}_- = (\mathrm{id}_{\mathcal H} - \mathrm{pr}_W) - (\mathrm{id}_{\mathcal H} - \mathrm{pr}_+) = -(\mathrm{pr}_W - \mathrm{pr}_+)\in L_p(\mathcal H),
    \end{equation*}
    which is as desired.
\end{proof}

\subsection{$\mathrm{Gr}_{\mathrm{res},p}(\mathcal H)$ as analytic Banach manifold}

 As the sum of a Fredholm operator with an operator of class $L_p$ is again Fredholm with the same index, we see that if $V$ belongs to $\mathrm{Gr}_{\mathrm{res},p}(\mathcal H)$ then so does the graph of every class $L_p$ operator $V\to V^\bot$. These graphs form the set $\Omega_V$ consisting of all  $W\in \mathrm{Gr}_{\mathrm{res},p}(\mathcal H)$ such that the orthogonal projection $\mathrm{pr}_V\vert_W \colon W\to V$ is an isomorphism. Equivalently, 
\begin{equation*}
    \Omega_V= \{W\in \mathrm{Gr}_{\mathrm{res},p}(\mathcal H)\mid V\oplus W^\bot =\mathcal H\},
\end{equation*}
where $V\oplus W^\perp$ is a topological direct sum.  
These sets are in one-to-one correspondence with the Banach spaces $L_p(V,V^\bot)$ of $L_p$ operators $V\to V^\bot$. More precisely:
\begin{proposition}
    The Schatten class restricted Grassmannian is a Banach manifold modeled on the Banach spaces $L_p(V,V^\bot)$ where ${V\in \mathrm{Gr}_{\mathrm{res},p}(\mathcal H)}$,
\end{proposition}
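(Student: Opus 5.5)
The plan is to build the standard Segal--Wilson-type atlas. The charts are the sets $\Omega_V$, $V\in \mathrm{Gr}_{\mathrm{res},p}(\mathcal H)$, with
\[
\varphi_V\colon \Omega_V\to L_p(V,V^\perp),\qquad \varphi_V(W)=T,
\]
where $W=\mathrm{graph}(T)=\{x+Tx\mid x\in V\}$. Equivalently,
\[
T=\mathrm{pr}_{V^\perp}\circ(\mathrm{pr}_V|_W)^{-1}.
\]

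\emph{Step 1: $\varphi_V$ is a bijection.} For $W\in\Omega_V$, the map $\mathrm{pr}_V|_W$ is an isomorphism, so $W$ is the graph of this $T$. We must check $T\in L_p$.
\begin{itemize}
\item By Proposition~\ref{projection grassmannian}, both $\mathrm{pr}_W-\mathrm{pr}_+$ and $\mathrm{pr}_V-\mathrm{pr}_+$ are in $L_p$, hence $\mathrm{pr}_W-\mathrm{pr}_V\in L_p$.
\item On $W$ we have $\mathrm{pr}_{V^\perp}|_W=\mathrm{pr}_{V^\perp}(\mathrm{pr}_W-\mathrm{pr}_V)|_W$, which is in $L_p$.
\item Therefore $T=\mathrm{pr}_{V^\perp}|_W\circ(\mathrm{pr}_V|_W)^{-1}$ is in $L_p$.
\end{itemize}
Conversely, for $T\in L_p(V,V^\perp)$ the graph lies in $\mathrm{Gr}_{\mathrm{res},p}(\mathcal H)$ by the remark preceding the statement (Fredholm plus $L_p$ is Fredholm). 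Its projection onto $V$ is clearly an isomorphism, so the graph lies in $\Omega_V$. Finally, every $W$ lies in $\Omega_W$, so the charts cover the Grassmannian.

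\emph{Step 2: the overlap $\varphi_V(\Omega_V\cap\Omega_{V'})$ is open.} Decompose the identity map of $\mathcal H$ from $V\oplus V^\perp$ to $V'\oplus V'^\perp$ as the block matrix
\[
\begin{pmatrix} a & b\\ c & d\end{pmatrix}.
\]
Here $b$ and $c$ are in $L_p$, since they are off-diagonal blocks of $\mathrm{pr}_{V'}-\mathrm{pr}_V\in L_p$; the blocks $a$ and $d$ are bounded. The graph of $T$ is $\{(a+bT)x+(c+dT)x\mid x\in V\}$. It belongs to $\Omega_{V'}$ iff $a+bT\colon V\to V'$ is invertible, and invertibility is an open condition in $T$.

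\emph{Step 3: the transition maps are analytic.} On the overlap, the transition map is
\[
T\mapsto (c+dT)(a+bT)^{-1}.
\]
Inversion is analytic on the open set of invertible operators in $\mathcal B(V,V')$. Products are bounded bilinear maps $\mathcal B\times L_p\to L_p$, and $L_p$ is an ideal. Hence the map is analytic as a map $L_p(V,V^\perp)\supseteq U\to L_p(V',V'^\perp)$, with values in $L_p$ since $c+dT\in L_p$.

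\emph{Step 4: Hausdorffness and modeling spaces.} Different $V$ give isomorphic model spaces: $u\in \mathrm U_{\mathrm{res},p}$ with $u(\mathcal H_+)=V$ (Proposition~\ref{homogeneous}) identifies $L_p(V,V^\perp)$ with $L_p(\mathcal H_+,\mathcal H_-)$. For Hausdorffness, I would use the injective map $W\mapsto \mathrm{pr}_W-\mathrm{pr}_+\in L_p(\mathcal H)$. This map is continuous in each chart, since $\mathrm{pr}_{\mathrm{graph}(T)}$ is an explicit analytic expression involving $(1+T^*T)^{-1}$. It therefore separates points by open sets.

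The main obstacle is expected in Steps 1--2: checking carefully that the blocks $b$, $c$ are $L_p$ and that $\mathrm{pr}_V|_W$ invertibility matches the formula $a+bT$. The rest is routine Banach-algebra calculus.
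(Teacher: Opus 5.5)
Your proposal is correct and follows essentially the paper's route. You use the same graph charts: your $T=\mathrm{pr}_{V^\perp}(\mathrm{pr}_V|_W)^{-1}$ coincides with the paper's $\mathrm{pr}_{V^\perp}\mathrm{pr}_W\mathrm{pr}_V(\mathrm{pr}_V\mathrm{pr}_W\mathrm{pr}_V)^{-1}$. You also get openness of the overlaps from invertibility of $a+bT$, and you use the same analytic transition map $(c+dT)(a+bT)^{-1}$.

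Your extra checks supply details the paper leaves implicit: surjectivity of the charts, $b,c\in L_p$ so that the transition map is genuinely $L_p$-valued, and Hausdorffness via the continuous injection $W\mapsto \mathrm{pr}_W-\mathrm{pr}_+$.
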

\begin{proof}
     Let $\Omega_V,\Omega_E\subseteq \mathrm{Gr}_{\mathrm{res},p}(\mathcal H)$ be the open sets associated to  $V,E\in \mathrm{Gr}_{\mathrm{res},p}(\mathcal H)$ defined above. 
    Define the chart associated to the element $V\in \mathrm{Gr}_{\mathrm{res},p}(\mathcal H)$ by 
    \begin{equation*}
        \varphi_V\colon \Omega_V\to L_p(V,V^\bot),\quad W\mapsto  \mathrm{pr}_{V^\bot}\mathrm{pr}_W\mathrm{pr}_V (\mathrm{pr}_V\mathrm{pr}_W\mathrm{pr}_V)^{-1},
    \end{equation*}
    see \cite{golinski2025note, golinski2024Banach}. Note that this map is well-defined as the projection from $W$ onto $V$ is an isomorphism. Moreover, we have that $\mathrm{pr}_W-\mathrm{pr}_V\in L_p$ so 
    \begin{equation*}
        \varphi_V(W) = \mathrm{pr}_{V^\bot}(\mathrm{pr}_W - \mathrm{pr}_V)\mathrm{pr}_V (\mathrm{pr}_V\mathrm{pr}_W\mathrm{pr}_V)^{-1}\in L_p(V,V^\bot).
    \end{equation*}
    We need to show that the set $\varphi_V(\Omega_V\cap \Omega_E)\subseteq L_p(V,V^\bot)$ is open and that the transition map $\psi_{V,E}\colon \varphi_E(\Omega_E\cap \Omega_V)\to \varphi_V(\Omega_V\cap \Omega_E)$ is smooth. 
    Let $E\in \mathrm{Gr}_{\mathrm{res},p}(\mathcal H)$ such that $\Omega_V\cap \Omega_E\neq \emptyset$. The expression of the transition function $\psi_{V,E}$ can be found in \cite{golinski2024Banach} (see also \cite{golinski2025note}) and is given by 
    \begin{equation}\label{transition function}
        \psi_{V,E}(A) = \varphi_V\circ \varphi_E^{-1}(A) = \mathrm{pr}_{V^\bot}(\mathrm{id}_E + A)(\mathrm{pr}_V(\mathrm{pr}_E+A))^{-1},
    \end{equation}
    $ A\in  \varphi_E(\Omega_E\cap \Omega_V)\subseteq L_p(E,E^\bot)$.
    The transition function is clearly well-defined. It remains us to show that $\varphi_E(\Omega_E\cap \Omega_V)\subseteq L_p(V,V^\perp) $ is open and $\psi_{V,E}$ is analytic. The set $\varphi_E(\Omega_E\cap \Omega_V)$ consists out of all operators $A\in L_p(E,E^\bot)$ such that the orthogonal projection from $V$ onto $\mathrm{graph}(A)$ is invertible. Write 
    \begin{equation*}
        \mathrm{id}_\mathcal H = \begin{pmatrix}a& b\\ c&d\end{pmatrix}\colon E\oplus E^\perp \to V\oplus V^\perp,
    \end{equation*}
    then $x+Ax$ ($x\in E$) is sent to $(a+bA)x\in V$, $(c+dA)x\in V^\perp$. So the projection of $W = \mathrm{graph}(A)$ onto $V$ is given by $a+bA$. It follows that $W\in \Omega_V$ if and only if $a+bA$ is invertible. Therefore, 
    \begin{equation*}
        \varphi_E(\Omega_E\cap \Omega_V) = \{A\in L_p(E,E^\bot) \mid a+bA \in \mathrm{GL}(E,V)\},
    \end{equation*}
    which is open inside $L_p(E,E^\perp)$. Note that Equation \eqref{transition function} reduces to 
    \begin{equation*}
        \psi_{V,E}(A) = (c+dA)(a+bA)^{-1}
    \end{equation*}
    which is analytic in $A$. 
    Hence $\mathrm{Gr}_{\mathrm{res},p}(\mathcal H)$ is an analytic Banach manifold modeled on the spaces $L_p(V,V^\bot)$.
\end{proof}


\section{The Banach algebra $\mathfrak{u}_{1,q}(\mathcal{H})$ as the predual of $\mathfrak{u}_{\res, p}(\mathcal{H})$
}\label{sec:duality}

The restricted trace of an operator in ${L}_{1,2}(\mathcal{H})$  was introduced in \cite{GO-grass}, where its trace-like properties were proved. In the present paper, we need to extend its properties with respect to the conjugation of invertible element in $L_{\res, p}(\Hc)$.  

\begin{proposition}\label{restricted trace properties}
    For $A\in L_{\mathrm{res},p}(\mathcal H)$ and $\alpha\in L_{1,q}(\mathcal H)$,  $A\alpha$ and $\alpha A$ are inside $L_{1,q}(\mathcal H)$ and 
    \begin{equation}
        \mathrm{Tr_{res}}(A\alpha) = \mathrm{Tr_{res}}(\alpha A).
    \end{equation}
\end{proposition}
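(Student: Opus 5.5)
We work throughout with block decompositions relative to $\mathcal H=\mathcal H_+\oplus\mathcal H_-$. Write
\[
A=\begin{pmatrix} A_{++}&A_{+-}\\ A_{-+}&A_{--}\end{pmatrix},\qquad
\alpha=\begin{pmatrix} \alpha_{++}&\alpha_{+-}\\ \alpha_{-+}&\alpha_{--}\end{pmatrix}.
\]
Since $[d,A]=2i\begin{pmatrix}0&A_{+-}\\ -A_{-+}&0\end{pmatrix}$, the condition $A\in L_{\mathrm{res},p}(\mathcal H)$ means that $A_{++},A_{--}$ are bounded and $A_{+-},A_{-+}\in L_p$. Likewise, $\alpha\in L_{1,q}(\mathcal H)$ means that $\alpha_{++},\alpha_{--}\in L_1$ and $\alpha_{+-},\alpha_{-+}\in L_q$, with $\frac1p+\frac1q=1$.

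\textbf{Step 1: $A\alpha$ and $\alpha A$ lie in $L_{1,q}(\mathcal H)$.} The diagonal blocks of $A\alpha$ are
\[
(A\alpha)_{++}=A_{++}\alpha_{++}+A_{+-}\alpha_{-+},\qquad (A\alpha)_{--}=A_{-+}\alpha_{+-}+A_{--}\alpha_{--}.
\]
Each is trace class. The terms $A_{++}\alpha_{++}$ and $A_{--}\alpha_{--}$ are a bounded operator times a trace-class one. The mixed terms are a product of an $L_p$ and an $L_q$ operator with conjugate exponents, so they are trace class by Hölder's inequality for Schatten ideals, $\|XY\|_1\le\|X\|_p\|Y\|_q$.

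The off-diagonal blocks are
\[
(A\alpha)_{+-}=A_{++}\alpha_{+-}+A_{+-}\alpha_{--},\qquad (A\alpha)_{-+}=A_{-+}\alpha_{++}+A_{--}\alpha_{-+}.
\]
These lie in $L_q$. A bounded operator times an $L_q$ operator is in $L_q$, and an $L_p$ operator times a trace-class operator is in $L_1\subseteq L_q$. The same bookkeeping handles $\alpha A$. It also yields the norm bound $\|A\alpha\|_{1,q}\le C\|A\|_{\mathrm{res},p}\|\alpha\|_{1,q}$, using that $\|A_{\pm\mp}\|_p\le\|[d,A]\|_p$ up to a constant.

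\textbf{Step 2: the trace identity.} By definition,
\[
\mathrm{Tr_{res}}(A\alpha)=\mathrm{Tr}(A_{++}\alpha_{++})+\mathrm{Tr}(A_{+-}\alpha_{-+})+\mathrm{Tr}(A_{-+}\alpha_{+-})+\mathrm{Tr}(A_{--}\alpha_{--}),
\]
\[
\mathrm{Tr_{res}}(\alpha A)=\mathrm{Tr}(\alpha_{++}A_{++})+\mathrm{Tr}(\alpha_{+-}A_{-+})+\mathrm{Tr}(\alpha_{-+}A_{+-})+\mathrm{Tr}(\alpha_{--}A_{--}).
\]
We match these term by term.

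For the diagonal terms, we use the classical identity $\mathrm{Tr}(XY)=\mathrm{Tr}(YX)$ for $X$ bounded and $Y$ trace class. For the mixed terms, we need the cyclicity $\mathrm{Tr}(XY)=\mathrm{Tr}(YX)$ for $X\in L_p(\mathcal H_1,\mathcal H_2)$ and $Y\in L_q(\mathcal H_2,\mathcal H_1)$ acting between different spaces. We would prove it in three moves.
\begin{itemize}
\item Approximate $X$ in $L_p$ norm by finite-rank operators $X_n$; this is possible for $1\le p<\infty$.
\item For finite rank $X_n$, the identity $\mathrm{Tr}(X_nY)=\mathrm{Tr}(YX_n)$ is elementary, by writing $X_n=\sum \langle\cdot,e_i\rangle f_i$.
\item Pass to the limit using the Hölder bound $|\mathrm{Tr}(ZY)|\le\|Z\|_p\|Y\|_q$ on both sides.
\end{itemize}
When $p=1$ and $q=\infty$, the roles are symmetric and one approximates the trace-class factor instead.

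Alternatively, the rectangular case reduces to the square one by embedding $\mathcal H_1$ and $\mathcal H_2$ into $\mathcal H$ and extending by zero. The only care needed is to check that the traces of the zero-extended operators coincide with the traces of the blocks.

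\textbf{Main obstacle.} We expect the main difficulty to be the mixed terms: $A_{+-}\alpha_{-+}$ and $\alpha_{-+}A_{+-}$ are trace class only through the Hölder pairing, not because either factor is trace class. Cyclicity therefore needs the approximation argument above rather than the standard bounded-times-trace-class statement. The remaining work is routine bookkeeping.
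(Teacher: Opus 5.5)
Your proposal is correct and takes essentially the same approach as the paper: block decomposition, the diagonal blocks are trace class via $L_p\cdot L_q\subset L_1$, and the traces are matched term by term using cyclicity $\mathrm{Tr}(XY)=\mathrm{Tr}(YX)$ (bounded times trace class for the diagonal terms, $L_p$ against $L_q$ for the mixed terms). You also check that the off-diagonal blocks lie in $L_q$ and prove the rectangular $L_p$--$L_q$ cyclicity by finite-rank approximation, two points the paper asserts without proof.
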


\begin{proof}
    For $A\in  L_{\mathrm{res},p}(\mathcal H)$ and $\alpha\in  L_{1,q}(\mathcal H)$ consider the block decompositions with respect to the Hilbert sum $\mathcal{H} = \mathcal{H}_+\oplus\mathcal{H}_-$ 
    \begin{equation*}
        A =\begin{pmatrix}
            A_{++} & A_{+-}\\
            A_{-+} & A_{--}
        \end{pmatrix},\ \ \text{ and }\  \ \alpha=\begin{pmatrix}
            \alpha_{++} & \alpha_{+-}\\\alpha_{-+} & \alpha_{--}
        \end{pmatrix}.
    \end{equation*}
    Then the operators $A\alpha$ and $\alpha A$ are given by 
    \begin{equation*}
        A\alpha = \begin{pmatrix}
            A_{++}\alpha_{++} + A_{+-}\alpha_{-+} & A_{++}\alpha_{+-}+ A_{+-}\alpha_{--}\\
            A_{-+}\alpha_{++} + A_{--}\alpha_{-+} & A_{-+}\alpha_{+-}+A_{--}\alpha_{--}
        \end{pmatrix}
    \end{equation*}
    and
    \begin{equation*}
        \alpha A = \begin{pmatrix}
            \alpha_{++}A_{++} + \alpha_{+-}A_{-+} & \alpha_{++}A_{+-}+ \alpha_{+-}A_{--}\\
            \alpha_{-+}A_{++} + \alpha_{--}A_{-+} & \alpha_{-+}A_{+-}+\alpha_{--}A_{--}
        \end{pmatrix}.
    \end{equation*}
    By definition,
    \begin{equation*}
        \mathrm{Tr}_{\mathrm{res}}(A\alpha) = \mathrm{Tr} (A_{++}\alpha_{++}+A_{+-}\alpha_{-+}) + \mathrm{Tr}(A_{-+}\alpha_{+-}+ A_{--}\alpha_{--}).
    \end{equation*}
    Since $L_p\cdot L_q\subset L_1$, all terms are trace class and 
    \begin{equation*}
        \mathrm{Tr}_{\mathrm{res}}(A\alpha) = \mathrm{Tr} (A_{++}\alpha_{++})+ \mathrm{Tr} (A_{+-}\alpha_{-+}) + \mathrm{Tr}(A_{-+}\alpha_{+-})+ \mathrm{Tr} (A_{--}\alpha_{--}).
    \end{equation*}
    Since  for $A_{++}\in L_{\infty}(\Hc_+)$ and $\alpha_{++}\in L_1(\Hc_+)$, one has $$\mathrm{Tr} (A_{++}\alpha_{++}) = \mathrm{Tr}(\alpha_{++}A_{++}),$$ and for $A_{+-}\in L_p(\Hc_-, \Hc_+)$ and $\alpha_{-+}\in L_q(\Hc_+, \Hc_-)$, $$\mathrm{Tr}( A_{+-}\alpha_{-+}) = \mathrm{Tr}(\alpha_{-+}A_{+-}),$$   it follows that 
    \begin{equation*}
        \begin{split}
            \mathrm{Tr_{res}}(A\alpha)  =& \mathrm{Tr}(\alpha_{++}A_{++}) + \mathrm{Tr}(\alpha_{-+}A_{+-})\\& +\mathrm{Tr}(\alpha_{+-}A_{-+})+ \mathrm{Tr}(\alpha_{--}A_{--}) = \mathrm{Tr_{res}}(\alpha A),
        \end{split}
    \end{equation*}
     as desired.  
\end{proof}

\begin{proposition}
    For $p> 1$ and $q>1$ such that $\frac{1}{p}+\frac{1}{q} = 1$ as well as for $p = 1$ and $q = \infty$,  the pairing between  $L_{1,q}(\mathcal H)$ and $L_{\mathrm{res},p}(\mathcal H)$ given by 
    \begin{equation*}
        \begin{split}
            \langle \alpha, A\rangle &= \mathrm{Tr_{res}}(\alpha A)\\
            &= \mathrm{Tr}(\alpha_{++}A_{++}) +\mathrm{Tr}(\alpha_{+-}A_{-+}) + \mathrm{Tr}(\alpha_{-+}A_{+-}) + \mathrm{Tr}(\alpha_{--}A_{--}),
        \end{split}
    \end{equation*}
    where $A\in L_{\mathrm{res},p}(\mathcal H)$ and $\alpha\in L_{1,q}(\mathcal H)$, induces an isomorphism $(L_{1,q}(\mathcal H))^* \cong L_{\mathrm{res},p}(\mathcal H)$ of Banach spaces. That is, $L_{1,q}(\mathcal H)$ is predual to $L_{\mathrm{res},p}(\mathcal H)$.
\end{proposition}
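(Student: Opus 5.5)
The plan is to reduce the statement to the classical Schatten-class dualities by working block by block. Both spaces split, as Banach spaces and up to equivalence of norms, into the four blocks of the decomposition $\mathcal H=\mathcal H_+\oplus\mathcal H_-$. For $L_{1,q}(\mathcal H)$ we have
\[
L_{1,q}(\mathcal H)\cong L_1(\mathcal H_+)\oplus L_q(\mathcal H_-,\mathcal H_+)\oplus L_q(\mathcal H_+,\mathcal H_-)\oplus L_1(\mathcal H_-),
\]
and the norm $\|\cdot\|_{1,q}$ is exactly the $\ell^1$-sum of the block norms. For $L_{\mathrm{res},p}(\mathcal H)$ we note that $[d,A]=2i\begin{pmatrix}0&A_{+-}\\-A_{-+}&0\end{pmatrix}$, so $A\in L_{\mathrm{res},p}(\mathcal H)$ if and only if the off-diagonal blocks are in $L_p$. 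Moreover $\|A\|_{\mathrm{res},p}$ is equivalent to
\[
\|A_{++}\|_\infty+\|A_{--}\|_\infty+\|A_{+-}\|_p+\|A_{-+}\|_p.
\]
Indeed, $\|A\|_\infty$ is bounded by the sum of the operator norms of the blocks, and each block norm is bounded by $\|A\|_\infty$. Also $\|A_{+-}\|_\infty\le\|A_{+-}\|_p$, and $\|[d,A]\|_p$ is comparable to $\|A_{+-}\|_p+\|A_{-+}\|_p$ (compress by $\mathrm{pr}_\pm$, then use the triangle inequality). So
\[
L_{\mathrm{res},p}(\mathcal H)\cong \mathcal B(\mathcal H_+)\oplus L_p(\mathcal H_+,\mathcal H_-)\oplus L_p(\mathcal H_-,\mathcal H_+)\oplus\mathcal B(\mathcal H_-)
\]
as a Banach space, with the $\ell^1$-sum norm up to equivalence.

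Next, the pairing $\langle\alpha,A\rangle$ written in the statement is the sum of four trace pairings. Each term is finite, as in the proof of Proposition~\ref{restricted trace properties}: the diagonal terms by $L_1\cdot\mathcal B\subset L_1$, and the off-diagonal terms by Hölder's inequality $L_q\cdot L_p\subset L_1$ (for $p=1,q=\infty$, by $L_\infty\cdot L_1\subset L_1$). This gives $|\langle\alpha,A\rangle|\le C\|\alpha\|_{1,q}\|A\|_{\mathrm{res},p}$, so the map $\Phi\colon L_{\mathrm{res},p}(\mathcal H)\to (L_{1,q}(\mathcal H))^*$, $A\mapsto\langle\cdot,A\rangle$, is bounded and linear. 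Each block term pairs one summand of $L_{1,q}$ with exactly one summand of $L_{\mathrm{res},p}$:
\begin{itemize}
\item $\alpha_{++}\in L_1(\mathcal H_+)$ with $A_{++}\in\mathcal B(\mathcal H_+)$, and likewise for the $--$ blocks;
\item $\alpha_{+-}\in L_q(\mathcal H_-,\mathcal H_+)$ with $A_{-+}\in L_p(\mathcal H_+,\mathcal H_-)$;
\item $\alpha_{-+}\in L_q(\mathcal H_+,\mathcal H_-)$ with $A_{+-}\in L_p(\mathcal H_-,\mathcal H_+)$.
\end{itemize}

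We then invoke the classical trace dualities:
\begin{itemize}
\item $L_1(\mathcal H_\pm)^*\cong\mathcal B(\mathcal H_\pm)$ via $\mathrm{Tr}(\alpha B)$;
\item for $1<q<\infty$, $L_q(\mathcal H_1,\mathcal H_2)^*\cong L_p(\mathcal H_2,\mathcal H_1)$ via $\mathrm{Tr}(\alpha B)$;
\item for $q=\infty$ (compact operators, the case $p=1$), $L_\infty(\mathcal H_1,\mathcal H_2)^*\cong L_1(\mathcal H_2,\mathcal H_1)$.
\end{itemize}
All of these are isometric isomorphisms. Together with the elementary fact that the dual of a finite $\ell^1$-direct sum is the $\ell^\infty$-direct sum of the duals (equivalent to the $\ell^1$-sum in finitely many summands), this shows $\Phi$ is a bijective bounded operator, hence an isomorphism of Banach spaces. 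If one prefers an explicit argument instead of the open mapping theorem, the inverse bound is obtained summand by summand from the isometric dualities.

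Surjectivity is shown concretely. Given $f\in(L_{1,q})^*$, we restrict it to each summand; for instance $\beta\mapsto f\bigl(\begin{pmatrix}0&\beta\\0&0\end{pmatrix}\bigr)$ on $L_q(\mathcal H_-,\mathcal H_+)$. The classical duality produces the corresponding block of $A$, and these four blocks assemble into $A\in L_{\mathrm{res},p}(\mathcal H)$ with $\Phi(A)=f$ by linearity. Injectivity follows by testing $A$ against finite-rank $\alpha$ supported in a single block.

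The main point to handle carefully is the endpoint case: the convention mismatch between ``$q=\infty$'' and the use of $L_\infty$ as compact operators, needed for the duality $(L_\infty)^*=L_1$ to hold. I must also make sure the index placement in the trace pairing matches the classical duality, which is $\mathrm{Tr}(\alpha_{+-}A_{-+})$ with $\alpha_{+-}\colon\mathcal H_-\to\mathcal H_+$. The rest is routine bookkeeping of norm equivalences.
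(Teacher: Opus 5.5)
Your proposal is correct and follows the same route as the paper: split both spaces into the four blocks relative to $\mathcal H=\mathcal H_+\oplus\mathcal H_-$ and apply the classical trace dualities $L_1(\mathcal H_\pm)^*\cong\mathcal B(\mathcal H_\pm)$, $L_q^*\cong L_p$ for $1<q<\infty$, and $L_\infty^*\cong L_1$ (with $L_\infty$ the compact operators), which is exactly what the paper's one-line proof cites. You also supply the bookkeeping the paper leaves implicit, namely the norm equivalence on $L_{\mathrm{res},p}(\mathcal H)$, the finite direct-sum duality, and the matching of indices in the block pairing.
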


\begin{proof}
    This follows directly from the duality between $L_p$ and $L_q$ for $1< p ,q<\infty$, together with the identification $(L_1(\Hc_\pm))^*\cong  \mathcal B(\Hc_\pm)$ and $(L_\infty(\mathcal H_\pm))^*\cong L_1(\mathcal H_\pm)$.
\end{proof}

\begin{proposition}\label{restricted_trace_conjugation_invariance}
Let $g\in L_{\mathrm{res},p}(\mathcal H)$ be invertible. Then for all  $A\in L_{\res, p}(\mathcal H)$ and $\alpha\in L_{1,q}(\mathcal H)$ one has
\begin{equation}\label{conj}
        \langle  \alpha, g A g^{-1}\rangle = \mathrm{Tr_{res}}(\alpha g A g^{-1}) = \mathrm{Tr_{res}}( g^{-1}\alpha g A) = \langle  g^{-1}\alpha g, A\rangle.
    \end{equation}
    In particular,
    \[
\mathrm{Tr_{res}}(g\alpha g^{-1}) = \mathrm{Tr_{res}}(\alpha).
    \]
\end{proposition}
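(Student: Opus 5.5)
The plan is to obtain \eqref{conj} by applying Proposition~\ref{restricted trace properties} twice, so the only preliminary work is to check the membership hypotheses at each stage.

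First, we note that $g^{-1}$ lies in $L_{\mathrm{res},p}(\mathcal H)$. This holds because $L_{\mathrm{res},p}(\mathcal H)$ is a Banach algebra that is inverse-closed in $\mathcal B(\mathcal H)$. Indeed, $[d,g^{-1}] = -g^{-1}[d,g]g^{-1}$, and $L_p(\mathcal H)$ is a two-sided ideal in $\mathcal B(\mathcal H)$, so $[d,g^{-1}]\in L_p(\mathcal H)$. It follows that $gAg^{-1}\in L_{\mathrm{res},p}(\mathcal H)$, since this space is an algebra. Proposition~\ref{restricted trace properties} also gives $\alpha g\in L_{1,q}(\mathcal H)$, and hence $g^{-1}\alpha g\in L_{1,q}(\mathcal H)$. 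Therefore both pairings in \eqref{conj} are defined.

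Next, we use that $L_{\mathrm{res},p}(\mathcal H)$ is an algebra to regard $B:=gA\in L_{\mathrm{res},p}(\mathcal H)$, and we set $\beta:=\alpha\in L_{1,q}(\mathcal H)$. We then apply Proposition~\ref{restricted trace properties} to the pair $g^{-1}\in L_{\mathrm{res},p}(\mathcal H)$ and $\alpha g A\in L_{1,q}(\mathcal H)$. Membership of $\alpha gA$ in $L_{1,q}(\mathcal H)$ follows from two right multiplications, each covered by that proposition. This yields
\[
\mathrm{Tr_{res}}\big((\alpha g A) g^{-1}\big) = \mathrm{Tr_{res}}\big(g^{-1}(\alpha g A)\big) = \mathrm{Tr_{res}}\big((g^{-1}\alpha g) A\big),
\]
where the last equality uses associativity of operator composition. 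This is exactly the middle equality of \eqref{conj}. The outer equalities in \eqref{conj} are simply the definition of the pairing.

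For the particular case, take $A=\mathrm{id}_{\mathcal H}$, which lies in $L_{\mathrm{res},p}(\mathcal H)$ since $[d,\mathrm{id}]=0$. Then \eqref{conj} gives $\mathrm{Tr_{res}}(\alpha)=\mathrm{Tr_{res}}(g^{-1}\alpha g)$. Replacing $g$ by $g^{-1}$, which is also invertible in $L_{\mathrm{res},p}(\mathcal H)$, gives $\mathrm{Tr_{res}}(g\alpha g^{-1})=\mathrm{Tr_{res}}(\alpha)$.

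The main point needing care is to keep track of which factor belongs to which space at each application of Proposition~\ref{restricted trace properties}. The cyclicity in that proposition is only valid for a product of one element of $L_{\mathrm{res},p}(\mathcal H)$ with one element of $L_{1,q}(\mathcal H)$; it is not a general cyclicity, since the diagonal blocks of products in $L_{\mathrm{res},p}(\mathcal H)$ need not be trace class. For this reason, the $L_{1,q}(\mathcal H)$ factor must always be grouped together, as in $\alpha gA$, before the cyclic move is made. The inverse-closedness of $L_{\mathrm{res},p}(\mathcal H)$ is routine once the commutator identity above is written down.
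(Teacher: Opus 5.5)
Your proposal is correct and is the paper's proof. You apply Proposition~\ref{restricted trace properties} to the pair $g^{-1}\in L_{\mathrm{res},p}(\mathcal H)$ and $\alpha gA\in L_{1,q}(\mathcal H)$, and then set $A=\mathrm{id}_{\mathcal H}$. You also make explicit two points the paper leaves implicit: that $g^{-1}\in L_{\mathrm{res},p}(\mathcal H)$, via $[d,g^{-1}]=-g^{-1}[d,g]g^{-1}$, and the substitution $g\mapsto g^{-1}$ in the special case. The sentence introducing $B:=gA$ and $\beta:=\alpha$ is never used and can be dropped.
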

\begin{proof}
    This follows from the Proposition~\ref{restricted trace properties}, since $\alpha g A \in L_{1,q}(\mathcal{H})$ and $g^{-1}\in L_{\mathrm{res},p}(\mathcal H)$, therefore $\mathrm{Tr_{res}}(\alpha g A g^{-1}) = \mathrm{Tr_{res}}(g^{-1}\alpha g A) $. The second equality follows from equation~\eqref{conj} with $A$ equal to the identity.
\end{proof}

\begin{proposition}\label{duality} For $1<p\leq 2$ and $q\geq 2$ such that $\frac{1}{p}+\frac{1}{q} = 1$ as well as for $p = 1$ and $q = \infty$, 
the Banach space $\mathfrak{u}_{1,q}$ is a predual of the unitary
restricted algebra $\mathfrak{u}_{\res,p}$, the duality pairing
$\langle\cdot\,,\cdot\rangle$ being given by the restricted trace
\begin{equation}\label{dualitypairing}
\langle\cdot\,,\cdot\rangle~:\mathfrak{u}_{\res,p}\times \mathfrak{u}_{1,q}\rightarrow{\mathbb
R},\quad (a,\rho)\mapsto\mathrm{Tr_{res}}(a\rho).
\end{equation}
\end{proposition}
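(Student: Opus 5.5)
Taking the complex duality $(L_{1,q}(\mathcal H))^*\cong L_{\mathrm{res},p}(\mathcal H)$ proved just above as given, the plan is to restrict it to skew-Hermitian elements. There are three steps: show the pairing \eqref{dualitypairing} is real-valued and continuous, show it is nondegenerate, and show every continuous real functional on $\mathfrak{u}_{1,q}$ is represented by a unique $a\in\mathfrak{u}_{\res,p}$ with matching norms.

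\textbf{Real values and continuity.} For $a\in\mathfrak{u}_{\res,p}$ and $\rho\in\mathfrak{u}_{1,q}$, write $\mathrm{Tr_{res}}(a\rho)$ blockwise as in Proposition~\ref{restricted trace properties}. Each summand satisfies
\[
\overline{\mathrm{Tr}(a_{\epsilon\delta}\rho_{\delta\epsilon})}=\mathrm{Tr}(\rho_{\delta\epsilon}^*a_{\epsilon\delta}^*)=\mathrm{Tr}(\rho_{\epsilon\delta}a_{\delta\epsilon}),
\]
using $a^*=-a$, $\rho^*=-\rho$ and the cyclicity of the trace for $L_p\cdot L_q$ and $\mathcal B\cdot L_1$ products. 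Summing over the blocks shows that $\overline{\mathrm{Tr_{res}}(a\rho)}=\mathrm{Tr_{res}}(\rho a)$, and this equals $\mathrm{Tr_{res}}(a\rho)$ by Proposition~\ref{restricted trace properties}, so the pairing is real. Continuity follows from the Hölder estimate $|\langle a,\rho\rangle|\le C\|a\|_{\res,p}\|\rho\|_{1,q}$ already implicit in the complex duality. Here I would note that $\|[d,a]\|_p$ controls the off-diagonal blocks, since $[d,a]=2i\begin{pmatrix}0&a_{+-}\\ -a_{-+}&0\end{pmatrix}$.

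\textbf{Complexification argument.} Every $X\in L_{1,q}(\mathcal H)$ splits uniquely as $X=\rho_1+i\rho_2$ with $\rho_j\in\mathfrak{u}_{1,q}$, namely $\rho_1=(X-X^*)/2$ and $\rho_2=(X+X^*)/(2i)$; the same holds for $L_{\res,p}(\mathcal H)$ and $\mathfrak{u}_{\res,p}$. Both $L_{1,q}$ and $L_{\res,p}$ are stable under taking adjoints, because the defining conditions are symmetric. Given a real continuous functional $f$ on $\mathfrak{u}_{1,q}$, I would extend it complex-linearly by $F(\rho_1+i\rho_2)=f(\rho_1)+if(\rho_2)$, which is continuous on $L_{1,q}(\mathcal H)$. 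The complex duality then gives $F(X)=\mathrm{Tr_{res}}(XA)$ for a unique $A\in L_{\res,p}(\mathcal H)$. Decompose $A=a_1+ia_2$ with $a_j\in\mathfrak{u}_{\res,p}$. For $\rho\in\mathfrak{u}_{1,q}$, the quantity $F(\rho)=f(\rho)$ is real, while
\[
\mathrm{Tr_{res}}(\rho a_1)+i\,\mathrm{Tr_{res}}(\rho a_2)
\]
has both traces real by the first step. Hence $f(\rho)=\mathrm{Tr_{res}}(\rho a_1)$, which gives surjectivity.

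\textbf{Injectivity and norms.} Suppose $\mathrm{Tr_{res}}(a\rho)=0$ for all $\rho\in\mathfrak{u}_{1,q}$. By complex linearity the vanishing extends to all of $L_{1,q}(\mathcal H)$, so $a=0$ by the complex duality. The norm equivalence comes from the complex case together with the bound $\|\rho_j\|\le\|X\|$ for the splitting. The open mapping theorem then yields an isomorphism of Banach spaces.

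\textbf{Main obstacle.} The step needing the most care is the reality and symmetry check. It requires trace cyclicity for each block product in the correct Schatten classes, including the case $p=1$, $q=\infty$, where the off-diagonal blocks of $\rho$ are only compact; Proposition~\ref{restricted trace properties} handles that case. Once that is in place, the rest is formal bookkeeping of the complexification.
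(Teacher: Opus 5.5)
Your proof is correct, but it is organized differently from the paper's.

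\textbf{The paper's route.} The paper does not invoke the complex duality $(L_{1,q}(\mathcal H))^*\cong L_{\res,p}(\mathcal H)$. Instead it uses $a_{-+}=-a_{+-}^*$ and $\rho_{+-}=-\rho_{-+}^*$ to rewrite the pairing as
\[
\Tr(a_{++}\rho_{++})+2\Re\Tr(a_{+-}\rho_{-+})+\Tr(a_{--}\rho_{--}).
\]
This exhibits $\mathfrak u_{\res,p}$ and $\mathfrak u_{1,q}$ as real direct sums of skew-Hermitian diagonal blocks and one freely chosen off-diagonal block. It then identifies the real duals block by block. For the diagonal blocks it gets $(\mathfrak u(\mathcal H_\pm)\cap L_1(\mathcal H_\pm))^*\cong\mathfrak u(\mathcal H_\pm)$ by a small complexification argument on each block. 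For the off-diagonal block it uses the real part of the complex $L_p$--$L_q$ trace pairing.

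\textbf{Your route.} You perform the complexification once, globally. You use the $*$-stability of $L_{1,q}$ and $L_{\res,p}$, the splitting $X=\rho_1+i\rho_2$, and the reality of $\mathrm{Tr_{res}}(a\rho)$. You derive that reality correctly from $\overline{\Tr Y}=\Tr Y^*$ together with Proposition~\ref{restricted trace properties}. Together these reduce the statement to the preceding complex proposition.

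\textbf{What each buys.} Your argument is shorter and purely structural. It would apply to any $*$-closed complex predual pair whose pairing satisfies $\overline{\langle X,A\rangle}=\langle X^*,A^*\rangle$. It also makes the reality of the pairing explicit, which the paper leaves implicit. The price is that it inherits the rigor of the complex duality statement, whose proof in the paper is only a one-line sketch. The paper's block computation needs only the classical dualities $(L_1)^*\cong\mathcal B$ and $(L_q)^*\cong L_p$. It also displays the factor $2$ on the off-diagonal part explicitly.
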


\begin{proof}
Consider two arbitrary elements
$$a=\begin{pmatrix} a_{++} & a_{+-} \\
                    -a_{+-}^* & a_{--}
    \end{pmatrix}\in\mathfrak{u}_{\res,p} \quad
\text{ and } \quad
\rho=\begin{pmatrix} \rho_{++} & -\rho_{-+}^* \\
                     \rho_{-+} & \rho_{--}
    \end{pmatrix}\in\mathfrak{u}_{1,q}.$$
Then
\begin{equation}\label{mult}
a\rho=\begin{pmatrix}
       a_{++}\rho_{++}+a_{+-}\rho_{-+} & -a_{++}\rho_{-+}^{*}+a_{+-}\rho_{--} \\
       -a_{+-}^*\rho_{++}+a_{--}\rho_{-+} & a_{+-}^{*}\rho_{-+}^{*}+a_{--}\rho_{--}
    \end{pmatrix}\in L_{1,q}(\Hc),
\end{equation}
hence
\begin{equation}\label{pairing}
\mathrm{Tr_{res}}(a\rho)=\Tr(a_{++}\rho_{++})+2\Re\Tr(a_{+-}\rho_{-+})+
\Tr(a_{--}\rho_{--}),
\end{equation}
where $\Re z$ denotes the real part of the complex number $z $. Recall that the bilinear functional
$$
\mathcal B(\mathcal{H}_{\pm})\times L_1(\mathcal{H}_{\pm})\rightarrow{\mathbb C},\quad
(b,c)\mapsto\Tr(bc),
$$
induces a topological isomorphism of complex Banach spaces
$\left(L_1(\Hc_{\pm})\right)^*\cong \mathcal B(\Hc_{\pm})$. It follows
that the trace induces a topological isomorphism of real Banach
spaces
\begin{equation}\label{unitarypairing}
\left(\mathfrak{u}(\Hc_{\pm})\cap L_1(\Hc_{\pm})\right)^{*}\cong\mathfrak{u}(\Hc_{\pm}).
\end{equation}
Indeed, the $\mathbb{C}$-linearity of the trace implies that for
$b\in \mathcal B(\Hc_{\pm})$ the following conditions are equivalent:
$$\bigl(\forall c\in\mathfrak{u}(\Hc_{\pm})\cap L_1(\Hc_{\pm})\bigr)\quad \Tr(bc)=0
\iff
\bigl(\forall c\in L_1(\Hc_{\pm})\bigr)\quad \Tr(bc)=0.
$$
Moreover the condition
$$\bigl(\forall c\in\mathfrak{u}(\Hc_{\pm})\cap L_1(\Hc_{\pm})\bigr)\quad
\Tr(bc)\in\mathbb{R}
$$
implies
$$\bigl(\forall c\in\mathfrak{u}(\Hc_{\pm})\cap L_1(\Hc_{\pm})\bigr)\quad
\Tr(b+b^{*})c=0,
$$
hence $b$ belongs to $\mathfrak{u}(\Hc_{\pm})$.
On the other hand, the
duality pairing of complex Banach spaces
$$
L_p(\Hc_{-},\Hc_{+})\times L_q(\Hc_{+},\Hc_{-})
\to{\mathbb C},\quad (b,c)\mapsto\Tr(bc),
$$
induces a duality pairing of the underlying real Banach spaces by
\begin{equation}\label{s2r}
L_p(\Hc_{-},\Hc_{+})\times L_q(\Hc_{+},\Hc_{-})\rightarrow
{\mathbb R},\quad (b,c)\mapsto\Re\,\Tr(bc).
\end{equation}
In view of formula \eqref{pairing}, we conclude that the restricted trace
induces a topological isomorphism of real Banach spaces
$$(\mathfrak{u}_{1, q})^*\cong\mathfrak{u}_{\res, p}.$$
\end{proof}

\section{Central extension of $\mathfrak{u}_{1,q}$ as Banach Lie-Poisson space}\label{central_extension}
Let us recall that a continuous $\mathbb{K}$-valued ($\mathbb K\in \{\mathbb R,\mathbb C\}$) 2-cocycle on a Lie algebra $\mathfrak{g}$ is a continuous bilinear map $\Phi: \mathfrak{g}\times\mathfrak{g}\rightarrow \mathbb{K}$ that is skew-symmetric 
\[
\Phi(A, B) = - \Phi(B, A)
\]
and satisfies the cocycle condition
\[
\Phi([A, B], C) + \Phi([C, A], B) + \Phi([B, C], A) = 0
\]
for all $A, B, C \in \mathfrak{g}$. In what follows, we restrict the Schwinger  cocycle (\cite{BRT07, segal-wilson,Wurzbacher}) to the Lie algebra $\mathfrak{u}_{\res, p}\subseteq \mathfrak u_{\mathrm{res},2}$ using the restricted trace.
\begin{definition}\label{def:schwinger}
    For $1< p\leq 2$, and $A, B\in L_{\res, p}(\Hc)$, define the Schwinger $2$-cocycle by 
    \begin{equation}\label{schwinger}
s(A, B) := \mathrm{Tr_{res}} (A[d, B]),
\end{equation}
\end{definition}
\begin{proposition}
    For $1< p\leq 2$, $s$ is a continuous
two-cocycle on $L_{\res, p}(\Hc)$.
\end{proposition}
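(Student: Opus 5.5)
The plan is to reduce everything to block computations with respect to $\mathcal H = \mathcal H_+\oplus\mathcal H_-$, using only the facts that $L_p\cdot L_p\subseteq L_{p/2}\subseteq L_1$ for $p\leq 2$, and that $\|\cdot\|_2\leq\|\cdot\|_p$. This is in the spirit of Proposition~\ref{restricted trace properties}.

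\emph{Well-definedness.} First I would compute $[d,B]$ explicitly. Since $d=i(\mathrm{pr}_+-\mathrm{pr}_-)$, the commutator is block off-diagonal:
\begin{equation*}
[d,B]=\begin{pmatrix} 0 & 2iB_{+-}\\ -2iB_{-+} & 0\end{pmatrix}.
\end{equation*}
Here $B_{\pm\mp}\in L_p$ because $B\in L_{\res,p}(\Hc)$. The diagonal blocks of $A[d,B]$ are then $-2iA_{+-}B_{-+}$ and $2iA_{-+}B_{+-}$. These are products of two $L_p$ operators, hence lie in $L_{p/2}\subseteq L_1$ because $p\leq 2$. So the restricted trace makes sense, and
\begin{equation*}
s(A,B)=2i\bigl(\Tr(A_{-+}B_{+-})-\Tr(A_{+-}B_{-+})\bigr).
\end{equation*}
Bilinearity is clear. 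By Hölder, $|s(A,B)|\leq 2(\|A_{-+}\|_2\|B_{+-}\|_2+\|A_{+-}\|_2\|B_{-+}\|_2)$. Since $\|\cdot\|_2\leq\|\cdot\|_p$ and $\|A_{\pm\mp}\|_p\leq C\|[d,A]\|_p$, this gives $|s(A,B)|\leq C'\|A\|_{\res,p}\|B\|_{\res,p}$, which is continuity.

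\emph{Skew-symmetry.} This follows from the formula above together with the cyclicity $\Tr(XY)=\Tr(YX)$ for Hilbert–Schmidt $X,Y$ between $\mathcal H_\pm$. Exchanging $A$ and $B$ swaps the two terms with a sign change.

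\emph{Cocycle identity.} This is the main step. I would write
\begin{equation*}
s([A,B],C)=2i\bigl(\Tr([A,B]_{-+}C_{+-})-\Tr([A,B]_{+-}C_{-+})\bigr),
\end{equation*}
and expand the off-diagonal blocks. For example,
\begin{equation*}
[A,B]_{+-}=A_{++}B_{+-}+A_{+-}B_{--}-B_{++}A_{+-}-B_{+-}A_{--},
\end{equation*}
and analogously for $[A,B]_{-+}$. Every resulting term has the form $\Tr(X\,Y\,Z)$, where exactly one factor is a bounded diagonal block and the other two are off-diagonal $L_p$ blocks. Their product is trace class, so the trace can be permuted cyclically. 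Summing over the cyclic permutations of $(A,B,C)$, each term such as $\Tr(A_{++}B_{+-}C_{-+})$ should appear twice with opposite signs after one cyclic rotation. For instance, it meets $-\Tr(C_{-+}A_{++}B_{+-})$ coming from $s([C,A],B)$.

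The hardest part is this bookkeeping: sixteen terms per cocycle evaluation, each needing a justification that cyclicity applies. In every case this holds because the two off-diagonal factors multiply to a trace class operator and the remaining factor is bounded. To keep the computation manageable, I would group the terms according to which of $A,B,C$ contributes the diagonal block, and check cancellation group by group.
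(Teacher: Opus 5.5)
Your proof is correct. Well-definedness, skew-symmetry and continuity are handled essentially as in the paper: the paper bounds $|s(A,B)|$ using $L_p\cdot L_q\subseteq L_1$ and $L_p\subseteq L_q$, while you use the Hilbert--Schmidt Cauchy--Schwarz inequality and $L_p\subseteq L_2$, which amounts to the same estimate.

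The genuine difference is in the cocycle identity. The paper never opens the blocks there. It uses Proposition~\ref{restricted trace properties}, namely $\mathrm{Tr_{res}}(X\alpha)=\mathrm{Tr_{res}}(\alpha X)$ for $X\in L_{\res,p}(\Hc)$ and $\alpha\in L_{1,q}(\Hc)$, together with skew-symmetry, to move every factor except $A$ to the right. This rewrites the cyclic sum as $\mathrm{Tr_{res}}\bigl(A\,([[C,d],B]+[[d,B],C]+[[B,C],d])\bigr)$, which vanishes by the Jacobi identity in $\mathcal B(\Hc)$.

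You instead expand $s([A,B],C)$ into blocks and cancel terms using cyclicity of the ordinary trace. Your bookkeeping does close, though the count is off: each $s([X,Y],Z)$ produces eight terms, not sixteen, so 24 in the cyclic sum. Each term has exactly one diagonal factor. The terms whose diagonal factor comes from $A$ arise only from $s([A,B],C)$ and $s([C,A],B)$, four from each, and they cancel in pairs. For example, $-2i\Tr(A_{++}B_{+-}C_{-+})$ cancels against $+2i\Tr(C_{-+}A_{++}B_{+-})$. Meanwhile $s([B,C],A)$ contributes only $B$- and $C$-diagonal terms, and the same pattern holds for $B$ and $C$.

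The paper's route is shorter and explains why the identity holds: it is just the Jacobi identity with $d$ inserted, made rigorous by moving factors only where the trace property allows. It also reuses a lemma that is needed again later for the coadjoint action. Your route is self-contained. It needs only the elementary fact that $\Tr(XYZ)$ is cyclically invariant when two factors are Hilbert--Schmidt and the third is bounded, and it makes transparent exactly where $p\le 2$ enters, at the cost of the case-by-case bookkeeping.
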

\begin{proof}
Observe that $s$ is well-defined since $[d,B]$ is an block off-diagonal operator in $L_p(\mathcal H)\subseteq L_q(\mathcal H)$, hence is contained in $L_{1,q}(\mathcal H)$. Therefore, by Proposition~\ref{restricted trace properties}, it can be paired with $A\in L_{\mathrm{res},p}(\mathcal H)$. 
\begin{enumerate}
  \item Let us show that $s$ is skew-symmetric.
    Using the block decomposition of $A$ and $B \in L_{\res, p}(\Hc)$ with respect to the Hilbert sum $\Hc = \Hc_+\oplus \Hc_-$, one has
    \begin{equation*}
        \begin{split}
            s(A,B) &= \mathrm{Tr_{res}}(A[d,B])\\
            &= (-2i)\mathrm{Tr}(A_{+-}B_{-+}) +(2i)\mathrm{Tr}(A_{-+}B_{+-})\\
            &= (-2i)\mathrm{Tr}(B_{-+}A_{+-}) +(2i)\mathrm{Tr}(B_{+-}A_{-+}) = -\mathrm{Tr_{res}}(B[d,A]).
        \end{split}
    \end{equation*}
  \item Let us show that $s$ satisfies the cocycle identity. For $A,B,C\in L_{\mathrm{res},p}(\mathcal H)$ one has
  \begin{equation*}
       \begin{split}
                \sum\nolimits_{\mathrm{cycl}(A,B,C)} s([A,B],C) 
                =&\mathrm{Tr_{res}}([A,B][d,C]) +\mathrm{Tr_{res}}([C,A][d,B]) +\mathrm{Tr_{res}}([B,C][d,A])\\
                =& \mathrm{Tr_{res}}((AB-BA)[d,C]) + \mathrm{Tr_{res}}((CA-AC) [d,B]) - \mathrm{Tr_{res}}(A[d,[B,C]])\\
            =& \mathrm{Tr_{res}}(AB[d,C]) - \mathrm{Tr_{res}}(BA[d,C]) +\mathrm{Tr_{res}}(CA[d,B]) \\
            &- \mathrm{Tr_{res}}(AC[d,B]) - \mathrm{Tr_{res}}(A[d,[B,C]])
            \end{split}
  \end{equation*}
    where we have used that $AB, BA, CA,AC, BC$ and $CB$ are in $L_{\res, p}(\Hc),$ and $[d, A], [d, B]$ and $[d, C]$ are in $L_{1,q}(\Hc)$, as well as the skew-symmetry of $s$. It follows that 
    \begin{equation*}
            \begin{split}
                \sum\nolimits_{\mathrm{cycl}(A,B,C)} s([A,B],C)  =& \mathrm{Tr_{res}}(AB[d,C]) - \mathrm{Tr_{res}}(A[d,C]B) + \mathrm{Tr_{res}}(A[d,B]C) \\
                &- \mathrm{Tr_{res}}(AC[d,B]) - \mathrm{Tr_{res}}(A[d,[B,C]])\\
                =&\mathrm{Tr_{res}}(A[[C,d],B]) + \mathrm{Tr_{res}}(A[[d,B],C]) + \mathrm{Tr_{res}}(A[[B,C],d])\\
                =& \mathrm{Tr_{res}}(A([[C,d],B] +[[d,B],C] + [[B,C],d]))= 0
            \end{split}
        \end{equation*}
    since $[[C, d], B] + [[d, B],C] + [[B, C], d]$ belongs to $L_{1, q}(\Hc)$  and is equal to $0$ by the Jacobi identity in $\mathcal B(\Hc).$
    \item Let us show that $s$ is continuous as map from $L_{\res, p}(\Hc)\times L_{\res, p}(\Hc)$ into $\mathbb{C}.$ One has
    \[
    \begin{array}{ll}
    |s(A, B)|  & \leq |\Tr (A_{+-} B_{-+}) | +  |\Tr (A_{-+} B_{+-}) |\\
     & \leq\|A_{+-}\|_p\|B_{-+}\|_q + \|A_{-+}\|_p \|B_{+-}\|_q\\
     & \leq \|A_{+-}\|_p\|B_{-+}\|_p + \|A_{-+}\|_p \|B_{+-}\|_p \ \textrm{ $($since } L_p\subset L_q)\\ 
     & \leq 2 \|A\|_{\res,p}\|B\|_{\res, p} 
    \end{array}
    \]
\end{enumerate}

\end{proof}

\begin{definition}
We define the Banach Lie algebra $\tilde{\mathfrak{u}}_{\res, p}$
as the central extension of $\mathfrak{u}_{\res, p}$ with continuous
two-cocycle $s$  given by
\begin{equation}\label{schwinger definition}
s(A, B) := \mathrm{Tr_{res}} (A[d, B]),
\end{equation}
for all $A,B \in \mathfrak{u}_{\res, p}$. 
That is, $\tilde{\ug}_{\res, p}$ is
the Banach Lie algebra $\ug_{\res, p} \oplus \mathbb{R}$ endowed with the bracket
$[\cdot, \cdot]_{d}$ defined by
\begin{equation}\label{bracket u^d}
\left[(A, a),(B, b)\right]_{d} = \left([A, B], s(A, B)\right).
\end{equation}
We will denote by
$\langle\cdot\,, \cdot\rangle_{d}$ the duality pairing between
$\tilde{\ug}_{1, q} = \ug_{1, q} \oplus \mathbb{R}$ and
$\tilde{\ug}_{\res, p} = \ug_{\res, p} \oplus \mathbb{R}$ given by
$$
\left\langle (\mu, \gamma), (A, a) \right\rangle_{d} = \langle
\mu, A\rangle + \gamma a.
$$
\end{definition}

\begin{remark}
    {\rm Observe that the continuous $2$-cocycle \eqref{schwinger definition} takes values in $\mathbb R$. Indeed, for $A,B\in \mathfrak u_{\mathrm{res},p}$ one has}
    \begin{equation*}
      \begin{split}
          \overline{s(A,B)}&= \overline{\mathrm{Tr}_{\mathrm{res}}(A[d,B])}\\ &= \mathrm{Tr}_{\mathrm{res}}([d,B]^*A^* ) \\&=\mathrm{Tr}_{\mathrm{res}}([B,d](-A))\\
          &=\mathrm{Tr_{res}}(A[d,B]) = s(A,B).
      \end{split}
    \end{equation*}
    
\end{remark}

\begin{remark}\label{Lie algebra ures,p}
    {\rm Continuity of the bracket $[\cdot,\cdot]_d$ follows from continuity of the bracket on $\mathfrak u_{\mathrm{res},p}$ and continuity of the $2$-cocycle. Moreover, the $2$-cocycle identity of $s$ implies that $[\cdot,\cdot]_d$ satisfies the Jacobi identity.}
\end{remark}

\begin{remark}
    {\rm By Proposition~\ref{duality}, $\tilde{\ug}_{1, q}$ is a predual of $\tilde{\ug}_{\res, p}$
    \[
(\tilde{\ug}_{1, q})^* = \tilde{\ug}_{\res, p}   
    \]
    }
\end{remark}
The following Proposition is an extension of Proposition~2.5 in \cite{BRT07}
to the case $1< p\leq 2$, see also Theorem~3.14 in \cite{T19} for arbitrary duality pairings.
\begin{proposition}\label{Banach_Poisson}
Set $1< p\leq 2$ and $q$ such that $\frac{1}{p}+\frac{1}{q} = 1$, and $q = \infty$ for $p = 1$. The Banach space $\tilde{\ug}_{1, q}:= \ug_{1, q}\oplus\mathbb{R}$ is a Banach
Lie-Poisson space with respect to $\tilde{\ug}_{\res, p}$ for the Poisson bracket
\begin{equation}\label{bracket_d2}
\{f,g\}_{d}(\mu, \gamma) := \langle \mu,  \left[ D_{\mu}f(\mu,\gamma),
D_{\mu}g(\mu,\gamma) \right] \rangle + \gamma s(D_{\mu}f(\mu,\gamma), D_{\mu}g(\mu,\gamma))
\end{equation}
where $f, g$ are smooth real functions on $\tilde{\ug}_{1, q}$, $(\mu,
\gamma)$ is an arbitrary element in $\tilde{\ug}_{1, q}$,
and $D_{\mu}$ denotes the partial Fr\'echet derivative with
respect to $\mu \in \tilde{\ug}_{1, q}$.
\end{proposition}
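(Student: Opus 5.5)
We will use the Odzijewicz--Ratiu criterion for Banach Lie--Poisson spaces (\cite{OR03}). A Banach space $\mathfrak{b}$ is a Banach Lie--Poisson space with respect to a Banach Lie algebra $\mathfrak{g}=\mathfrak{b}^*$ with the Kirillov-type bracket if the predual $\mathfrak{b}\subset\mathfrak{b}^{**}=\mathfrak{g}^*$ is invariant under the coadjoint action, i.e. $\ad^*_X\mathfrak{b}\subset\mathfrak{b}$ for every $X\in\mathfrak{g}$. By Proposition~\ref{duality} and the subsequent remark, $(\tilde{\ug}_{1,q})^*=\tilde{\ug}_{\res,p}$. By Remark~\ref{Lie algebra ures,p}, $\tilde{\ug}_{\res,p}$ is a Banach Lie algebra with bracket $[\cdot,\cdot]_d$. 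So the main task is to compute the coadjoint action of $\tilde{\ug}_{\res,p}$ on $\tilde{\ug}_{\res,p}^*$ and show that it preserves $\tilde{\ug}_{1,q}$.

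First we compute, for $(A,a),(B,b)\in\tilde{\ug}_{\res,p}$ and $(\mu,\gamma)\in\tilde{\ug}_{1,q}$,
\[
\left\langle (\mu,\gamma),[(A,a),(B,b)]_d\right\rangle_d=\mathrm{Tr_{res}}(\mu[A,B])+\gamma\,\mathrm{Tr_{res}}(A[d,B]).
\]
By Proposition~\ref{restricted trace properties}, $\mu A\in L_{1,q}(\Hc)$ and $A\mu\in L_{1,q}(\Hc)$, and the restricted trace is cyclic with respect to elements of $L_{\res,p}(\Hc)$. Hence $\mathrm{Tr_{res}}(\mu[A,B])=\mathrm{Tr_{res}}([\mu,A]B)$. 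For the cocycle term we use skew-symmetry, $s(A,B)=-\mathrm{Tr_{res}}(B[d,A])=-\mathrm{Tr_{res}}([d,A]B)$, where the second equality is again Proposition~\ref{restricted trace properties} with $[d,A]\in L_p\subset L_q$, block off-diagonal and therefore in $L_{1,q}(\Hc)$. Putting these together,
\[
-\ad^*_{(A,a)}(\mu,\gamma)=\bigl([\mu,A]-\gamma[d,A],\,0\bigr),
\]
up to the sign convention. So it suffices to check that $[\mu,A]-\gamma[d,A]\in\ug_{1,q}$. Skew-hermitian-ness is clear. Proposition~\ref{restricted trace properties} gives $[\mu,A]\in L_{1,q}(\Hc)$. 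The term $[d,A]$ lies in $L_p\subset L_q$ and has zero diagonal blocks. (For $p=1$, $q=\infty$ the same holds.)

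Once invariance is established, \cite{OR03} (or Theorem~3.14 in \cite{T19} for general duality pairings) gives the Lie--Poisson structure on $\tilde{\ug}_{1,q}$. The bracket is $\{f,g\}(\nu)=\langle \nu,[Df(\nu),Dg(\nu)]_d\rangle_d$, where $Df(\nu)\in\tilde{\ug}_{1,q}^*=\tilde{\ug}_{\res,p}$. Writing $Df(\mu,\gamma)=(D_\mu f,\partial_\gamma f)$ and using the bracket formula \eqref{bracket u^d}, the $\mathbb{R}$-components drop out, because the bracket depends only on the $\ug_{\res,p}$-parts. This yields exactly \eqref{bracket_d2}.

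The main obstacle is the careful justification of cyclicity in the coadjoint computation. The identity $\mathrm{Tr_{res}}(\mu AB)=\mathrm{Tr_{res}}(B\mu A)$ requires $\mu A\in L_{1,q}(\Hc)$ and $B\in L_{\res,p}(\Hc)$, which is exactly Proposition~\ref{restricted trace properties}. But $[A,B]$ need not have trace-class diagonal blocks, so one must never separate $\mathrm{Tr_{res}}(\mu AB)$ and $\mathrm{Tr_{res}}(\mu BA)$ into a trace of a commutator of non-$L_{1,q}$ operators. We will therefore group terms as $(\mu A)B$ and $B(\mu A)$ throughout.
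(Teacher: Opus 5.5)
Your proposal is correct and follows the paper's proof: you apply the Odzijewicz--Ratiu criterion (Theorem~4.2 of \cite{OR03}), compute the coadjoint action using the cyclicity of $\mathrm{Tr_{res}}$ from Proposition~\ref{restricted trace properties} (plus skew-symmetry of $s$ for the cocycle term), obtain $([A,\mu-\gamma d],0)$ up to sign, check that it lies in $\tilde{\ug}_{1,q}$, and read off the bracket from $[\cdot,\cdot]_d$. The only slip is cosmetic: with the convention $\langle \ad^*_X\nu, Y\rangle_d=\langle\nu,[X,Y]_d\rangle_d$, the element $([\mu,A]-\gamma[d,A],0)$ is $+\ad^*_{(A,a)}(\mu,\gamma)$ rather than $-\ad^*_{(A,a)}(\mu,\gamma)$, and this sign does not matter for invariance.
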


\begin{proof}
    Since $\tilde{\ug}_{1, q}$ is a predual of $\tilde{\ug}_{\res, p}$, by Theorem~4.2 in \cite{OR03}, the Banach space
$\tilde{\ug}_{1, q}$ is a Banach Lie-Poisson space with respect to $\tilde{\ug}_{\res, p}$ if and
only if $\tilde{\ug}_{\res, p}$ is a Banach Lie algebra
satisfying $\ad_{A}^{*}(\tilde{\ug}_{1, q}) \subset
\tilde{\ug}_{1, q} \subset (\tilde{\ug}_{\res, p})^{*}$ for all
$A \in \tilde{\ug}_{\res, p}$. By Remark \ref{Lie algebra ures,p}, $\mathfrak u_{\mathrm{res},p}$ is a Lie algebra. To see that the coadjoint action
of $\tilde{\ug}_{\res, p}$ preserves the predual
$\tilde{\ug}_{1, q}$, note that for every $(A, a),(B, b) \in
\tilde{\ug}_{\res, p}$ and every $(\mu, \gamma) \in
\tilde{\ug}_{1, q}$, one has
\begin{equation}\label{computation_coad}
\begin{array}{ll}
\langle -\ad_{(A, a)}^{*}(\mu, \gamma), (B, b)\rangle_{d} &:= 
  \langle (\mu, \gamma), -\ad_{(A, a)}(B, b)\rangle_{d}\\&
 =  \langle (\mu,
\gamma), -[(A, a), (B, b)]_{d}\rangle_{d}\\
&=  \langle (\mu, \gamma), \left(-[A, B], - s(A, B)\right)
\rangle_{d}\\&
 = -\mathrm{Tr_{res}} \mu [A, B] - \gamma \mathrm{Tr_{res}} A[d, B]\\
&=  -\mathrm{Tr_{res}} \mu [A, B]- \gamma \mathrm{Tr_{res}} [A, d] B  \\& =  \langle
(-\ad^{*}_A(\mu) - \gamma [A, d], 0), (B, b) \rangle_{d}.
\end{array}
\end{equation}
It follows from the non-degeneracy of the pairing $\langle \cdot, \cdot\rangle_{d}$, that 
\[
-\ad_{(A, a)}^{*}(\mu, \gamma) = (-\ad^{*}_A(\mu) - \gamma [A, d], 0),
\]
where $(A, a) \in \tilde{\ug}_{\res, p}$ and $(\mu, \gamma)\in \tilde{\ug}_{1, q}$.
Note that $\gamma [A, d]\in L_{1,q}(\Hc)$ for $1< p\leq 2$, and is skew-adjoint. Moreover,
\[
\begin{split}
    \langle -\ad^{*}_A(\mu), B\rangle = -\mathrm{Tr_{res}} (\mu [A, B]) &= -\mathrm{Tr_{res}}(\mu AB - \mu BA) \\&= -\mathrm{Tr_{res}}(\mu AB) +\mathrm{Tr_{res}}(\mu BA),    
\end{split}
\]
since $\mu AB$ and $\mu BA$ are in $L_{1, q}(\Hc)$. By Proposition~\ref{restricted trace properties}, 
\[
\mathrm{Tr}_{\res}(\mu BA) = \mathrm{Tr}_{\res}(A\mu B),
\]
since $\mu B\in L_{1, q}(\Hc)$. Therefore
\[
-\mathrm{Tr}_{\res}(\mu AB) +\mathrm{Tr}_{\res}(\mu BA) = -\mathrm{Tr}_{\res}([\mu, A]B),
\]
which implies that $-\ad^{*}_A(\mu) = -[\mu, A]$ and
\[
-\ad_{(A, a)}^{*}(\mu, \gamma) = ([A, \mu] - \gamma [A, d], 0) = ([A, \mu -\gamma d], 0).
\]
 Since $L_{1, q}(\Hc)\cdot L_{\res, p}(\Hc) \subset L_{1, q}(\Hc)$, it follows that the coadjoint action
of $\tilde{\ug}_{\res, p}$ preserves the predual
$\tilde{\ug}_{1, q}$.
The expression of the Poisson bracket is standard.
\end{proof}

\begin{remark}
    {\rm For each $\gamma\in \mathbb R$, the affine subspace $\mathfrak u_{1,q}\oplus \{\gamma\}$ is a Banach Poisson manifold with respect to the Poisson structure induced from $\mathfrak u_{1,q}\oplus \mathbb R$. }
\end{remark}

\section{The restricted $p$-Schatten class Grassmannian as affine coadjoint orbit of $\operatorname{U}_{\res, p}(\mathcal{H})$}

By Proposition~\ref{restricted_trace_conjugation_invariance}, the coadjoint action of $\operatorname{U}_{\res, p}(\Hc)$  on 
$\ug_{1, q} $ is given by
\[
\begin{array}{ll}
\langle \Ad^*_{g^{-1}}(\alpha), A\rangle & := \langle \alpha, \Ad_{g^{-1}}(A)\rangle \\
& = \mathrm{Tr_{res}}(\alpha g^{-1} A g) = \mathrm{Tr_{res}}(g\alpha g^{-1} A)\\
&= \langle g\alpha g^{-1}, A\rangle,
\end{array}
\]
where $\alpha \in \ug_{1, q}$ and $A \in \mathfrak{u}_{\res, p}$. By the non-degeneracy of the pairing $\langle\cdot, \cdot\rangle$ given in Proposition~\ref{duality}, it follows that the coadjoint action of $\operatorname{U}_{\res, p}(\Hc)$  on 
$\ug_{1, q} $ reads
\[
\Ad^*_{g^{-1}}(\alpha) = g\alpha g^{-1}.
\]
In other words, the invariance of the pairing \eqref{conj} implies that the coadjoint action of $\operatorname{U}_{\res, p}(\Hc)$  on 
$\ug_{1, q} $ reduces to the action by conjugation.  

\begin{proposition}\label{affine_coadjoint}
The  unitary group $
\operatorname{U}_{\res, p}(\Hc)$ acts on the Poisson manifold
$\ug_{1, q} \oplus \{\gamma\} \subset
\tilde{\ug}_{1, q}$ by the affine coadjoint action as follows.
For  $g \in \operatorname{U}_{\res, p}(\Hc)$,
\begin{equation*}
g\cdot(\mu, \gamma) := \left(\Ad^{*}_{g^{-1}}(\mu) 
-\gamma\sigma(g), \gamma\right) 
\end{equation*}
where $\mu \in \ug_{1, q}$, $\gamma\in\mathbb{R}$, and where
\begin{equation*}
    \sigma\colon \operatorname{U}_{\res, p}(\Hc) \to \ug_{1, q},\quad g  \mapsto  gdg^{-1} - d.
\end{equation*}
\end{proposition}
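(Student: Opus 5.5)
To prove the statement I will check three things: the map $\sigma$ really lands in $\ug_{1,q}$, the formula defines a group action, and the action is compatible with the Poisson structure of Proposition~\ref{Banach_Poisson}. The action property will come from a one-cocycle identity for $\sigma$.

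\emph{Step 1: $\sigma(g)\in\ug_{1,q}$.} For $g\in\operatorname{U}_{\res,p}(\Hc)$ write $\sigma(g)=gdg^{-1}-d=[g,d]g^{-1}$. Since $[g,d]\in L_p(\Hc)$ and $g^{-1}=g^*$ is bounded, $\sigma(g)\in L_p(\Hc)$. Because $1<p\leq 2$, we have $L_p\subset L_1$ fails in general, so a direct argument is needed for the diagonal blocks. I will compute the blocks explicitly. With $d=i(\mathrm{pr}_+-\mathrm{pr}_-)$, one gets $[g,d]=-2i\begin{pmatrix}0&-g_{+-}\\ g_{-+}&0\end{pmatrix}$ up to sign conventions. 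Then $\sigma(g)_{++}$ is, up to a constant, $g_{+-}(g^*)_{-+}=-g_{+-}g_{+-}^{*}$-type, a product of two $L_p$ operators. For $p\le 2$ this lies in $L_{p/2}\subset L_1$, and similarly for $\sigma(g)_{--}$. The off-diagonal blocks are in $L_p\subset L_q$ since $q\geq 2\geq p$. Skew-adjointness follows from $(gdg^{-1})^*=g d^* g^{-1}=-gdg^{-1}$. Hence $\sigma(g)\in\ug_{1,q}$. This is the step I expect to be the main point, since it is exactly where the restriction $p\le 2$ is used.

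\emph{Step 2: cocycle identity.} Next I will verify
\[
\sigma(gh)=ghdh^{-1}g^{-1}-d=g(hdh^{-1}-d)g^{-1}+gdg^{-1}-d=\Ad^*_{g^{-1}}\sigma(h)+\sigma(g),
\]
using that $\Ad^*_{g^{-1}}$ acts on $\ug_{1,q}$ by conjugation, as established just before the statement. The conjugation preserves $\ug_{1,q}$ by Proposition~\ref{restricted trace properties}. Then
\[
g\cdot(h\cdot(\mu,\gamma))=\bigl(g h\mu h^{-1}g^{-1}-\gamma g\sigma(h)g^{-1}-\gamma\sigma(g),\gamma\bigr)=(gh)\cdot(\mu,\gamma),
\]
and $e\cdot(\mu,\gamma)=(\mu,\gamma)$ since $\sigma(e)=0$. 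Equivalently, $g\cdot(\mu,\gamma)=(g(\mu-\gamma d)g^{-1}+\gamma d,\gamma)$, which makes the action property transparent. Continuity, and in fact smoothness, of $(g,\mu)\mapsto g\cdot(\mu,\gamma)$ follows from the block estimates in Step 1 together with continuity of multiplication $L_{\res,p}\times L_{1,q}\to L_{1,q}$.

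\emph{Step 3: Poisson compatibility.} Finally I will check that this affine action is the one integrating the infinitesimal coadjoint action of $\tilde\ug_{\res,p}$ computed in Proposition~\ref{Banach_Poisson}. Differentiating at $g=e$ in direction $A$ gives $([A,\mu]-\gamma[A,d],0)=([A,\mu-\gamma d],0)$, which matches $-\ad^*_{(A,a)}(\mu,\gamma)$ found there.

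For the Poisson property itself, the action of $g$ is the coadjoint action of the centrally extended group on the slice $\ug_{1,q}\oplus\{\gamma\}$. Concretely, it is conjugation after translating by $\gamma d$. To see that it preserves the bracket \eqref{bracket_d2}, I will compute $D_\mu(f\circ g)=g^{-1}D_\mu f\, g$. I will then use invariance of $\Tr_{\res}$ under conjugation (Proposition~\ref{restricted_trace_conjugation_invariance}) and the identity $s(g^{-1}Ag,g^{-1}Bg)=s(A,B)-\langle\sigma(g),[A,B]\rangle$. That identity follows from $g[d,g^{-1}Bg]g^{-1}=[gdg^{-1},B]$ together with $gdg^{-1}=d+\sigma(g)$. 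Combining these shows that $\{f\circ g,h\circ g\}_d=\{f,h\}_d\circ g$.
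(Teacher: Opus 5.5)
Your proof is correct and follows the paper's route. You use the block computation of $\sigma(g)=[g,d]g^{*}$: the diagonal blocks are products of two $L_p$ operators, hence trace class for $p\le 2$, and the off-diagonal blocks lie in $L_p\subset L_q$. You then use the cocycle identity $\sigma(gh)=g\sigma(h)g^{-1}+\sigma(g)$, which you verify explicitly where the paper omits the computation.

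Your Step 3 goes beyond what the paper proves and is also correct: it shows that the action integrates $-\ad^*_{(A,a)}$ and preserves $\{\cdot,\cdot\}_d$ via $s(g^{-1}Ag,g^{-1}Bg)=s(A,B)-\langle\sigma(g),[A,B]\rangle$. The only blemish is a harmless sign: since $(g^{*})_{-+}=g_{+-}^{*}$, one has $g_{+-}(g^{*})_{-+}=+g_{+-}g_{+-}^{*}$, not $-g_{+-}g_{+-}^{*}$.
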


\begin{proof}
Let us verify that for every $g\in \mathrm{U}_{\mathrm{res},p}(\mathcal H)$ we have $gdg^{-1} -d \in \mathfrak u_{1,q}$. Observe that 
\begin{equation*}
    \sigma(g) = gdg^{*} -d = [g,d]g^*.
\end{equation*}
Consider the block decomposition of $g$ with respect to the Hilbert sum $\mathcal H = \mathcal H_+\oplus \mathcal H_-$
\begin{equation*}
    g =\begin{pmatrix} g_{++} & g_{+-} \\
                    g_{-+} & g_{--}
    \end{pmatrix}\in\operatorname{U}_{\res, p}(\Hc).
\end{equation*}
Then 
\begin{equation*}
    [g,d]g^* = 2i\begin{pmatrix}
        0 & -g_{+-}\\ g_{-+} & 0
    \end{pmatrix}\begin{pmatrix}
        g^*_{++} & g^*_{-+} \\ g^*_{+-} & g^*_{--}
    \end{pmatrix} = 2i\begin{pmatrix}
        -g_{+-}g^*_{+-} & -g_{+-}g^*_{--}\\ g_{-+}g^*_{++} & g_{-+}g^*_{-+}
    \end{pmatrix}.
\end{equation*}
The off-diagonal blocks are of class $L_q$ as their product lands in $L_p$ which, by the condition on $p$, is contained in $L_q$. The diagonal blocks are of trace class as $1< p \leq 2$, we have that  $L_p\cdot L_p \subseteq L_p\cdot L_q \subseteq L_1$. On the other hand we have 
\begin{equation}\label{product}
gdg^*
    =
\begin{pmatrix}
i g_{++}g_{++}^{*}-i g_{+-}g_{+-}^{*}
      & i g_{++}g_{-+}^{*}-i g_{+-}g_{--}^{*} \\
i g_{-+}g_{++}^{*}-i g_{--}g_{+-}^{*}
      & i g_{-+}g_{-+}^{*}-i g_{--}g_{--}^{*}.
    \end{pmatrix}.
\end{equation}
This multiplication is clearly skew-Hermitian, since $d$ is also skew-Hermitian we conclude that $\sigma(g)\in \mathfrak u_{1,q}$ for all $g\in \mathrm{U}_{\mathrm{res},p}(\mathcal H)$.

Denoting by
$\textrm{Aff}\left(\ug_{1,q}\oplus\{\gamma\}\right)$ the
affine group of transformations of
$\ug_{1,q}\oplus\{\gamma\}$, it remains to show that
$$
\begin{aligned}
(\Ad^*,-\gamma\sigma)\colon \operatorname{U}_{\res, p}(\Hc) &\to
\textrm{Aff}(\ug_{1,q}\oplus\{\gamma\}) =
\textrm{GL}(\ug_{1,q}\oplus\{\gamma\})
\rtimes \ug_{1,q}\\
g &\mapsto(\Ad^{*}_{g^{-1}}, -\gamma\sigma(g))
\end{aligned}
$$
is a group homomorphism. For this, we have to check that 
\begin{equation*}
    \sigma(g_{1}g_{2}) = \Ad^{*}_{g_{1}^{-1}}\sigma(g_{2}) +
\sigma(g_{1})
\end{equation*}
for all $g_{1}$, $g_{2}$ in $\operatorname{U}_{\res, p}(\Hc)$. This is the same computation as in the proof of Proposition 2.9 in \cite{BRT07} and we omit it here. 
\end{proof}

\begin{proposition}\label{isotropy}
For $\gamma\neq 0$, the isotropy group of $(0,
\gamma)\in(\ug_{\res,p})_{*}\oplus\{\gamma\}$ for the
$\operatorname{U}_{\res, p}(\Hc)$-affine coadjoint action  is the Banach Lie subgroup $\operatorname{U}(\Hc_+)\times \operatorname{U}(\Hc_-)$ of
$\operatorname{U}_{\res, p}(\Hc)$.
\end{proposition}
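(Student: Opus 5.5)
Using the formula of Proposition~\ref{affine_coadjoint}, the action on $(0,\gamma)$ is
\[
g\cdot(0,\gamma) = \bigl(\Ad^*_{g^{-1}}(0) - \gamma\sigma(g), \gamma\bigr) = \bigl(-\gamma(gdg^{-1}-d), \gamma\bigr).
\]
Since $\gamma\neq 0$, $g$ lies in the isotropy group exactly when $gdg^{-1}=d$, that is, when $gd=dg$. The plan is to identify this commutant of $d$ inside $\operatorname{U}_{\res, p}(\Hc)$ and then check that it is a Banach Lie subgroup.

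For the commutant, I use the block decomposition already computed in the proof of Proposition~\ref{affine_coadjoint}:
\[
[g,d] = 2i\begin{pmatrix} 0 & -g_{+-}\\ g_{-+} & 0\end{pmatrix}.
\]
So $[g,d]=0$ if and only if $g_{+-}=0$ and $g_{-+}=0$, i.e. $g$ is block diagonal. For a block diagonal $g$, the unitarity conditions $g^*g=gg^*=\id_{\Hc}$ split blockwise, so $g_{++}\in\operatorname{U}(\Hc_+)$ and $g_{--}\in\operatorname{U}(\Hc_-)$. Conversely, every $g_{++}\oplus g_{--}$ with $g_{\pm\pm}\in\operatorname{U}(\Hc_\pm)$ commutes with $d$. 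Such an element lies in $\operatorname{U}_{\res, p}(\Hc)$ trivially, because $[d,g]=0\in L_p$. Hence the isotropy group is $\operatorname{U}(\Hc_+)\times\operatorname{U}(\Hc_-)$ as a set, which matches the stabilizer of $\Hc_+$ found in Proposition~\ref{homogeneous}.

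The only point that needs some care is the Banach Lie subgroup structure. I would argue as follows.
\begin{enumerate}
\item The subgroup $\operatorname{U}(\Hc_+)\times\operatorname{U}(\Hc_-)$ is closed in $\operatorname{U}_{\res, p}(\Hc)$: it is the zero set of the continuous map $g\mapsto[g,d]$, and the $\|\cdot\|_{\res,p}$-topology is finer than the norm topology.
\item Its Lie algebra is $\mathfrak{u}(\Hc_+)\times\mathfrak{u}(\Hc_-)$, the block-diagonal part of $\mathfrak{u}_{\res,p}$.
\item This subalgebra has a closed complement, namely the block off-diagonal skew-Hermitian operators with $L_p$ entries, which form the space $\{A\in\mathfrak{u}_{\res,p}: A_{++}=0,\ A_{--}=0\}$. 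The decomposition is topological because the block projections are continuous for $\|\cdot\|_{\res,p}$.
\item The exponential map of $\operatorname{U}_{\res, p}(\Hc)$ restricts to the usual exponential of $\operatorname{U}(\Hc_+)\times\operatorname{U}(\Hc_-)$, and exponentiating the complement gives a local chart in which the subgroup is the subspace $\mathfrak{u}(\Hc_+)\times\mathfrak{u}(\Hc_-)$.
\end{enumerate}

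Alternatively, the subgroup is the intersection of $\operatorname{U}_{\res, p}(\Hc)$ with the algebraic (degree one) condition $gd-dg=0$, so Harris--Kaup's Algebraic Subgroup Theorem \cite{harris1977linear}, already invoked in the introduction, can be applied inside $L_{\res,p}(\Hc)$ together with the complemented Lie algebra above. I expect this Lie subgroup verification to be the only step needing citation rather than computation.
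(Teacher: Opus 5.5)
Your proposal is correct and follows essentially the same route as the paper. The isotropy of $(0,\gamma)$ is the commutant of $d$ in $\operatorname{U}_{\res,p}(\Hc)$, i.e.\ the block-diagonal unitaries. Its Lie algebra $\ug(\Hc_+)\oplus\ug(\Hc_-)$ is topologically complemented in $\ug_{\res,p}$ by the off-diagonal skew-Hermitian $L_p$ part, and the Banach Lie subgroup property then follows from the standard criterion. The only difference is at that last step: the paper simply cites Bourbaki, whereas you sketch the exponential chart or invoke Harris--Kaup. Your explicit block computation of $[g,d]$ and your closedness check spell out details the paper leaves implicit.
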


\begin{proof}
 For
$\mu = 0$ and $\gamma\neq0$, the stabilizer of $(0, \gamma)$
consists of all elements of $\operatorname{U}_{\res, p}(\Hc)$ which commute with $d$.
Hence, for $\gamma\neq0$, the Lie algebra of the isotropy is
$$\ug_{(0, \gamma)}:=\ug(\Hc_{+})\oplus\ug(\Hc_{-}).$$
A topological complement to
$\ug_{(0, \gamma)}$ in $\ug_{\res, p}$ can be chosen as
$$\mathfrak{m}_p:=\ug(\Hc)\cap\left(L_p(\Hc_{+},
\Hc_{-})\oplus L_p(\Hc_{-},\Hc_{+})\right).$$
By a classical result in \cite{bourbaki-v}, $\operatorname{U}(\Hc_+)\times \operatorname{U}(\Hc_-)$ is a Banach Lie subgroup of
$\operatorname{U}_{\res, p}(\Hc)$.
\end{proof}

\begin{theorem}
The restricted $p$-Schatten class Grassmannian $\mathrm{Gr}_{\mathrm{res},p}(\mathcal H)$ is diffeomorphic to the affine coadjoint orbit $\mathcal{O}_{(0, \gamma)}$ of the group $\operatorname{U}_{\res, p}(\Hc)$ passing through any $(0, \gamma)\in \mathfrak{u}_{1, q}$ with $\gamma\neq0$.
\end{theorem}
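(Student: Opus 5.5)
The plan is to realize both spaces as the same homogeneous space $\operatorname{U}_{\res, p}(\Hc)/(\operatorname{U}(\Hc_+)\times\operatorname{U}(\Hc_-))$ and to make the identification explicit through orthogonal projections.

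\emph{Step 1: an explicit formula for the orbit.} By Proposition~\ref{affine_coadjoint}, for $g\in\operatorname{U}_{\res, p}(\Hc)$ we have
\[
g\cdot(0,\gamma) = (-\gamma\sigma(g),\gamma) = (-\gamma(gdg^{-1}-d),\gamma).
\]
Since $d = i(2\,\mathrm{pr}_+ - \id_\Hc)$, this gives $gdg^{-1}-d = 2i(g\,\mathrm{pr}_+g^{-1}-\mathrm{pr}_+) = 2i(\mathrm{pr}_{g(\Hc_+)}-\mathrm{pr}_+)$. The orbit $\mathcal{O}_{(0,\gamma)}$ is therefore the image of the map
\[
\Phi\colon \mathrm{Gr}_{\mathrm{res},p}(\mathcal H)\to \ug_{1,q}\oplus\{\gamma\},\qquad W\mapsto\bigl(-2i\gamma(\mathrm{pr}_W-\mathrm{pr}_+),\gamma\bigr).
\]

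\emph{Step 2: $\Phi$ is a well-defined bijection onto the orbit and is equivariant.} By Proposition~\ref{homogeneous}, every $W$ has the form $g(\Hc_+)$, so $\Phi(W)=g\cdot(0,\gamma)$. The proof of Proposition~\ref{affine_coadjoint} shows that this element lies in $\ug_{1,q}$, so $\Phi$ lands in the orbit and is surjective onto it. Since $\gamma\neq 0$, $\Phi$ is injective, because $\mathrm{pr}_W$ determines $W$. Equivariance, $\Phi(gW)=g\cdot\Phi(W)$, follows from the cocycle identity for $\sigma$. On the group level, the stabilizers agree: by Proposition~\ref{homogeneous} and Proposition~\ref{isotropy}, both equal $\operatorname{U}(\Hc_+)\times\operatorname{U}(\Hc_-)$. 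So $\Phi$ is the composition of the two bijections with $\operatorname{U}_{\res, p}(\Hc)/(\operatorname{U}(\Hc_+)\times\operatorname{U}(\Hc_-))$.

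\emph{Step 3: the smooth structures.} First, the quotient $\operatorname{U}_{\res, p}(\Hc)/(\operatorname{U}(\Hc_+)\times\operatorname{U}(\Hc_-))$ is a Banach manifold, modeled on the complement $\mathfrak m_p$ from Proposition~\ref{isotropy}, with local charts $X\mapsto \exp(X)\,\mathrm{Stab}$ for $X\in\mathfrak m_p$ near $0$. Second, I will check that the orbit map $g\mapsto g(\Hc_+)$ induces a diffeomorphism from this quotient onto $\mathrm{Gr}_{\mathrm{res},p}(\mathcal H)$ with the analytic atlas built above.

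Near $\Hc_+$, an element $X=\begin{pmatrix}0&-a^*\\ a&0\end{pmatrix}\in\mathfrak m_p$ is sent to $\exp(X)\Hc_+$. Its chart value is
\[
\varphi_{\Hc_+}(\exp(X)\Hc_+) = (\exp X)_{-+}\bigl((\exp X)_{++}\bigr)^{-1}.
\]
This is analytic in $a\in L_p(\Hc_+,\Hc_-)$, and its differential at $0$ is $a\mapsto a$, an isomorphism. The inverse function theorem therefore gives a local diffeomorphism at the base point, and equivariance (translating by $g$, which maps charts analytically) propagates this everywhere. The orbit $\mathcal{O}_{(0,\gamma)}$ carries the manifold structure transported from the quotient. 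One can also note that $\Phi$ is the restriction of the affine, hence smooth, map $P\mapsto -2i\gamma(P-\mathrm{pr}_+)$, which is continuous from $\{P:P-\mathrm{pr}_+\in L_p\}$ into $\ug_{1,q}$ because $L_p\subset L_q$ and the diagonal blocks of $P-\mathrm{pr}_+$ are products of $L_p$ operators. This shows that the orbit is even an injectively immersed submanifold of $\ug_{1,q}$.

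\emph{Main obstacle.} I expect the hardest part to be Step 3: verifying that the quotient topology and charts coincide with the Grassmannian atlas. In particular, the local section $a\mapsto\exp(X)$ must be analytic into $\operatorname{U}_{\res, p}(\Hc)$, and the map must be a homeomorphism and not merely a smooth bijection. I would handle this through the explicit Cayley/graph section $A\mapsto$ the unitary part (polar decomposition) of $\begin{pmatrix}1&-A^*\\ A&1\end{pmatrix}$, which is analytic in $A\in L_p$. This section inverts the chart directly, so the inverse function theorem is not needed.
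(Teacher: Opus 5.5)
Your proposal is correct and follows the paper's route: you identify both the orbit and $\mathrm{Gr}_{\res,p}(\Hc)$ with $\operatorname{U}_{\res,p}(\Hc)/(\operatorname{U}(\Hc_+)\times\operatorname{U}(\Hc_-))$ via Propositions~\ref{homogeneous} and~\ref{isotropy}. Two things in your version go beyond the paper. Your Step~3 actually checks that the quotient structure is compatible with the chart atlas, which the paper only asserts. Your explicit map $W\mapsto(-2i\gamma(\mathrm{pr}_W-\mathrm{pr}_+),\gamma)$ also has the correct sign, whereas the paper's $\Phi_\gamma(g\Hc_+)=\gamma\sigma(g)$ has the opposite sign (it is the first component of a point of $\mathcal{O}_{(0,-\gamma)}$).

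One caveat on your side remark about immersion. For $1<p<2$ the differential of $\Phi$ into $\ug_{1,q}$ has range equal to the off-diagonal $L_p$ part, which is dense but not closed in the off-diagonal $L_q$ part. So the orbit is immersed in $\ug_{1,q}$ only in the weak sense of an injective tangent map, not as a split immersion.
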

\begin{proof}
  Observe that 
\[
\begin{array}{ll}
     \mathcal{O}_{(0, \gamma)} & =\left\{ (-\gamma \sigma(g), \gamma)\mid g\in \operatorname{U}_{\res, p}(\Hc)\right\}\\ & = \left\{ (-\gamma g d g^{-1} - \gamma d, \gamma)\mid g\in \operatorname{U}_{\res, p}(\Hc)\right\}.
\end{array}
\]
    By Proposition~\ref{isotropy}, for $\gamma\neq 0$,  the affine coadjoint orbit of $(0, \gamma)\in \mathfrak{u}_{1, q}$ is diffeomorphic to the quotient space $\operatorname{U}_{\res, p}(\Hc)/\left(\operatorname{U}(\Hc_+)\times \operatorname{U}(\Hc_-)\right)$. By Proposition~\ref{homogeneous}, this quotient space is diffeomorphic to the restricted $p$-Schatten class Grassmannian $\mathrm{Gr}_{\mathrm{res},p}(\mathcal H)$. An identification of $\mathrm{Gr}_{\mathrm{res},p}(\mathcal H)$ with $\mathcal O_{(0,\gamma)}$ is given by 
    \begin{equation*}
        \Phi_\gamma\colon \mathrm{Gr}_{\res,p}(\mathcal H)\to \mathcal O_{(0,\gamma)},\quad W\mapsto \gamma(i (\mathrm{pr}_W-\mathrm{pr}_{W^\perp}) - i(\mathrm{pr}_+ - \mathrm{pr}_-)).
    \end{equation*}
    This map is well-defined due to Propositions \ref{projection grassmannian} and \ref{dual_Grassmannian}. 
\end{proof}

\section{The restricted Grassmannian $\mathrm{Gr}_{\res, p}(\mathcal{H})$, $1< p\leq 2,$ as  symplectic manifold}

\subsection{Homogeneous symplectic form on  $\mathrm{Gr}_{\mathrm{res},p}(\mathcal H)$}

\begin{theorem}\label{homogeneous form}
    For $1< p\leq 2$, the homogeneous space 
    $$\mathrm{Gr}_{\mathrm{res},p}(\mathcal H) = \operatorname{U}_{\res, p}(\Hc)/\left(\operatorname{U}(\Hc_+)\times \operatorname{U}(\Hc_-)\right)$$ admits a natural $\operatorname{U}_{\res, p}(\Hc)$-invariant weak symplectic structure, whose expression at the class of the identity operator is given by
    \begin{equation}
        \Omega_{[\mathrm{id}]}\left([A], [B]\right) =   2\Im \Tr(A^*_{-+}B_{-+}) = -\frac{1}{2} \mathrm{Tr_{res}} A [d, B] = -\frac{1}{2} s(A, B),
    \end{equation}
    where 
    $$[A] = \left[\begin{pmatrix}A_{++} & -A^*_{-+}\\A_{-+} & A_{--}  \end{pmatrix}\right] = \left[\begin{pmatrix}0 & -A^*_{-+}\\A_{-+} & 0  \end{pmatrix}\right]$$ denotes the class of $A$ modulo the isotropy Lie algebra, and similarly for $B$.

\end{theorem}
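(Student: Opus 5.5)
The plan is to build the form by the standard Kirillov--Kostant--Souriau recipe for affine coadjoint orbits, transported to the homogeneous space $\operatorname{U}_{\res, p}(\Hc)/(\operatorname{U}(\Hc_+)\times \operatorname{U}(\Hc_-))$ via the diffeomorphism with $\mathcal{O}_{(0,\gamma)}$ from the previous theorem. Concretely, we define at the base point the bilinear form $\omega(A,B) := -\frac{1}{2}s(A,B)$ on $\ug_{\res,p}$. This is the KKS form $\langle (0,\gamma), [(A,0),(B,0)]_d\rangle_d = \gamma s(A,B)$, normalised by the choice $\gamma=-\tfrac12$. We then extend it to every point by left translation:
\[
\Omega_{[g]}(g_*[A], g_*[B]) := \omega(A,B).
\]

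The first step is to check that $\omega$ descends to $\ug_{\res,p}/\ug_{(0,\gamma)} \cong \mathfrak{m}_p$. By the block formula already computed in the proof of the continuity of $s$, $s(A,B)$ only involves the off-diagonal blocks $A_{+-},A_{-+},B_{+-},B_{-+}$. Hence $s(A,B)=0$ whenever $A\in \ug(\Hc_+)\oplus\ug(\Hc_-)$, and $\omega$ is well defined on classes. Writing $A_{+-}=-A_{-+}^*$ and $B_{+-}=-B_{-+}^*$ in $s(A,B)=-2i\Tr(A_{+-}B_{-+})+2i\Tr(A_{-+}B_{+-})$ gives $2i\Tr(A_{-+}^*B_{-+}) - 2i\,\overline{\Tr(A_{-+}^*B_{-+})} = -4\Im\Tr(A^*_{-+}B_{-+})$. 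Therefore $-\tfrac12 s = 2\Im\Tr(A^*_{-+}B_{-+})$, which is the stated formula. Skew-symmetry and continuity were proven earlier.

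The second step is to show that the translated form is well defined, i.e.\ independent of the representative $g$. This requires $\omega$ to be $\Ad_h$-invariant for $h\in \operatorname{U}(\Hc_+)\times\operatorname{U}(\Hc_-)$. Since $h$ commutes with $d$, we have $s(hAh^{-1},hBh^{-1})=\mathrm{Tr_{res}}(hA[d,B]h^{-1})$. This equals $s(A,B)$ by Proposition~\ref{restricted_trace_conjugation_invariance}, noting that $A[d,B]\in L_{1,q}(\Hc)$ by Proposition~\ref{restricted trace properties}. $\operatorname{U}_{\res,p}$-invariance then holds by construction. Smoothness follows because the form is given by a smooth left-invariant construction on the Banach Lie group, pushed down along the submersion $\operatorname{U}_{\res,p}\to \mathrm{Gr}_{\res,p}$, whose existence is given by Proposition~\ref{isotropy} and the complement $\mathfrak{m}_p$.

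The third step is closedness. Let $\pi$ be the projection, and let $\tilde\omega$ be the left-invariant $2$-form on $\operatorname{U}_{\res,p}(\Hc)$ equal to $\omega$ at the identity. Then $\pi^*\Omega=\tilde\omega$, and $\pi^*$ is injective since $\pi$ is a submersion, so it suffices that $d\tilde\omega=0$. On left-invariant fields $X_A, X_B, X_C$ this reads $-\omega([A,B],C)-\omega([B,C],A)-\omega([C,A],B)=0$, which is exactly the cocycle identity for $s$ proved earlier. Alternatively, $\Omega$ can be identified with the KKS form on $\mathcal{O}_{(0,\gamma)}$ given by the Lie--Poisson structure of Proposition~\ref{Banach_Poisson}.

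The last and main step is weak non-degeneracy. Suppose $\omega([A],[B])=0$ for all $B\in\mathfrak{m}_p$. Then $\Im\Tr(A_{-+}^*B_{-+})=0$ for all $B_{-+}\in L_p(\Hc_+,\Hc_-)$. Replacing $B_{-+}$ by $iB_{-+}$ gives $\Tr(A_{-+}^*B_{-+})=0$ for all such $B_{-+}$. Taking $B_{-+}=A_{-+}\in L_p$ yields $\|A_{-+}\|_2^2=0$, using $L_p\subset L_2$ because $p\le 2$. Hence $A_{-+}=0$ and $[A]=0$. The delicate point is that the form is only weakly symplectic: $\omega^\flat\colon\mathfrak{m}_p\to\mathfrak{m}_p^*$ lands in $L_q$-type functionals and is not onto for $p<2$. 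So we only claim injectivity, which is what ``weak'' means.
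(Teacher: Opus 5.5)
Your proposal is correct and follows essentially the same route as the paper. You define the form at $[\mathrm{id}]$ as $-\frac12 s$, equivalently $2\Im$ of the $L_2$ pairing on $L_p(\Hc_+,\Hc_-)\subset L_2(\Hc_+,\Hc_-)$. You check invariance under $\operatorname{U}(\Hc_+)\times\operatorname{U}(\Hc_-)$ so that the form descends and can be translated. Closedness comes from the cocycle identity for $s$, and weak non-degeneracy from $L_p\subset L_2$.

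The only difference is that you spell out what the paper leaves implicit or cites: the block computation giving $2\Im\Tr(A^*_{-+}B_{-+})$, the left-invariant-form argument on $\operatorname{U}_{\res,p}(\Hc)$ in place of the reference to Neeb's Lemma VI.1, and the test vector $B_{-+}=A_{-+}$ for non-degeneracy.
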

\begin{proof}
    The form $\Omega_{[\mathrm{id}]}$ is continuous due to continuity of the inner-product on $L_2$ and the continuous inclusions $L_p\hookrightarrow L_2$. 
    By the $\operatorname{U}_{\res, p}(\Hc)$-invariance, the symplectic structure is uniquely determined by its value at the tangent space at one point. The tangent space at the class of the identity operator (modulo the isotropy) is isomorphic to $L_p(\Hc_+, \Hc_-)$ and is included in $L_2(\Hc_+, \Hc_-)$ by the condition on $p$. The symplectic form coincides with twice the $L_2$-scalar product on $L_p(\Hc_+, \Hc_-)\subset L_2(\Hc_+, \Hc_-)$.
Since $\Omega_{[\id]}$ is invariant by the isotropy, it defines a $2$-form on the quotient space, which is clearly non-degenerate, but weak. The fact that $\Omega$ is closed follows from the cocycle condition satisfied by the Schwinger term (\cite[Lemma VI.1]{Nee1}). Denote by $L_P$ the natural left action of $g\in \mathrm U_{\mathrm{res},p}(\mathcal H)$ on $\mathrm{Gr}_{\mathrm{res},p}(\mathcal H)$. The symplectic form $\Omega$ at the class of the operator $g$ is given by 
\begin{equation*}
    \Omega_{[g]}([A],[B]) = \Omega_{[\mathrm{id}]}((L_{g^{-1}})_*[A],  (L_{g^{-1}})_*[B])
\end{equation*}
for $[A],[B]\in T_{[g]}\mathrm{Gr}_{\res,p}(\mathcal H)$, where $L_{g^{-1}}$ is the left translation by $g^{-1}$.
\end{proof}

\subsection{Kirillov-Kostant-Souriau symplectic form on the affine coadjoint orbits $\mathcal{O}_{(0, \gamma)}$}

\begin{theorem}\label{KKS form}
    For $1\leq p\leq 2$ and $\gamma\neq 0$, the symplectic form induced by the Lie-Poisson space $\tilde{\mathfrak u}_{1, q}$ on the affine coadjoint orbit $\mathcal{O}_{(0, \gamma)}$ is the $\operatorname{U}_{\res, p}(\Hc)$-invariant weak symplectic structure whose expression at the tangent space to $(0, \gamma)$ is given by
    \begin{equation}
        \omega_{(0, \gamma)}\left(X^{(A, a)}, X^{(B, b)}\right)  = \gamma s(A, B),
    \end{equation}
    where $X^{(A, a)} = -\ad^*_{(A, a)}(0, \gamma)$ and $X^{(B, b)} = -\ad^*_{(B, b)}(0, \gamma)$.
    \end{theorem}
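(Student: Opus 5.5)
The plan is to follow the standard Kirillov--Kostant--Souriau construction for Banach Lie--Poisson spaces (\cite{OR03}), applied to the affine coadjoint action of Proposition~\ref{affine_coadjoint}. We realise $\mathcal{O}_{(0,\gamma)}$ as the orbit of $(0,\gamma)$ under the coadjoint action of the centrally extended group. Equivalently, at the infinitesimal level, we use the coadjoint action of $\tilde{\ug}_{\res,p}$ on $\tilde{\ug}_{1,q}$ computed in the proof of Proposition~\ref{Banach_Poisson}:
\[
-\ad^*_{(A,a)}(\mu,\gamma) = ([A,\mu-\gamma d],0).
\]
At $(0,\gamma)$ this gives $X^{(A,a)} = (-\gamma[A,d],0) = (\gamma[d,A],0)$. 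This vector depends only on the class $[A]$ modulo the isotropy algebra $\ug(\Hc_+)\oplus\ug(\Hc_-)$ of Proposition~\ref{isotropy}, and $a$ plays no role. So the tangent space $T_{(0,\gamma)}\mathcal{O}_{(0,\gamma)}$ is $\{\gamma[d,A] : A\in\mathfrak{m}_p\}$, which is identified with $L_p(\Hc_+,\Hc_-)$ via $\gamma\neq0$.

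Next we define $\omega$ on the orbit by the KKS recipe:
\[
\omega_{(\mu,\gamma)}\bigl(X^{(A,a)},X^{(B,b)}\bigr) = \bigl\langle(\mu,\gamma),[(A,a),(B,b)]_d\bigr\rangle_d = \langle\mu,[A,B]\rangle + \gamma\, s(A,B).
\]
At $\mu=0$ this is $\gamma s(A,B)$, as claimed. We check well-definedness by showing that the right-hand side vanishes when $X^{(A,a)}=0$. Indeed, then
\[
\langle(\mu,\gamma),[(A,a),(B,b)]_d\rangle_d = \langle -\ad^*_{(A,a)}(\mu,\gamma),(B,b)\rangle_d \cdot(-1) = 0
\]
by \eqref{computation_coad}, which gives independence of the representatives. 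Skew-symmetry follows from the skew-symmetry of the bracket. To see that $\omega$ is the form induced by the Poisson bracket \eqref{bracket_d2}, note that Hamiltonian vector fields of $f$ are $X_f = -\ad^*_{D_\mu f}(\mu,\gamma)$. Hence $\omega(X_f,X_g)=\{f,g\}_d$, i.e. the symplectic leaf structure coincides with this $\omega$. Here we use that linear functions $\langle\cdot,(A,a)\rangle_d$ have differential $(A,a)$, so that every tangent vector is Hamiltonian.

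Invariance follows from the equivariance of the affine action: $g\cdot X^{(A,a)} = X^{\Ad_g(A,a)}$ at $g\cdot(\mu,\gamma)$, and the pairing is invariant by Proposition~\ref{restricted_trace_conjugation_invariance}. For the cocycle part we need $s(\Ad_gA,\Ad_gB)$ to be compatible with the affine term $\sigma$, and we intend to reduce this to the homomorphism property of Proposition~\ref{affine_coadjoint}. Closedness follows, as in Theorem~\ref{homogeneous form}, from the Jacobi identity for $[\cdot,\cdot]_d$ (equivalently, the cocycle identity; cf. \cite[Lemma VI.1]{Nee1}).

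The main obstacle is weak nondegeneracy at $(0,\gamma)$. Suppose $\gamma s(A,B)=0$ for all $B\in\mathfrak{m}_p$. Writing $s(A,B)$ in terms of $\Im\Tr(A^*_{-+}B_{-+})$ (as in Theorem~\ref{homogeneous form}), we need this to force $A_{-+}=0$. We would handle this by choosing $B_{-+} = iA_{-+}$, which lies in $L_p\subset L_2$. This gives a multiple of $\|A_{-+}\|_2^2$, which must then vanish, so $[A]=0$. The form is only weakly nondegenerate, since the induced map lands in $L_q$ rather than being onto $L_p^*$ compatibly with the $L_2$ structure.
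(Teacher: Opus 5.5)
Your proposal is correct and follows the same route as the paper: evaluate the KKS formula $\langle(0,\gamma),[(A,a),(B,b)]_d\rangle_d=\gamma s(A,B)$ using $X^{(A,a)}=([A,-\gamma d],0)$ from Proposition~\ref{Banach_Poisson}. You additionally spell out well-definedness, the link with $\{\cdot,\cdot\}_d$, invariance, closedness and the nondegeneracy check via $B_{-+}=iA_{-+}$, which the paper leaves implicit; one small fix is that your closing remark on weakness is garbled, since the flat map $A_{-+}\mapsto 4i\gamma A_{-+}^*$ is essentially the inclusion $L_p\hookrightarrow L_q\cong L_p^*$, non-surjective only for $p<2$ (for $p=2$ the form is in fact strong).
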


    \begin{proof}
    The Lie-Poisson structure of $\tilde{\mathfrak u}_{1, q}$ introduced in Section~\ref{central_extension} induces the $\operatorname{U}_{\res, p}(\Hc)$-invariant symplectic structure on the affine coadjoint orbit $\mathcal{O}_{(0, \gamma)}$ whose expression at the tangent space to $(0, \gamma)$ is
    \[
    \begin{array}{ll}
        \omega_{(0, \gamma)}\left(X^{(A, a)}, X^{(B, b)}\right) & =  \langle(0, \gamma), [(A, a), (B, b)]_d\rangle_d 
        \\ & =  
        \langle(0, \gamma), ([A, B], s(A, B))\rangle_d \\&  = \gamma s(A, B),
    \end{array}
    \]
    where $X^{(A, a)} = -\ad^*_{(A, a)}(0, \gamma) = ([A, -\gamma d], 0)$ and $X^{(B, b)} = -\ad^*_{(B, b)}(0, \gamma) = ([B, -\gamma d], 0)$.
    \end{proof}

    \begin{corollary}
        The restricted Grassmannian $\mathrm{Gr}_{\res, p}(\mathcal{H})$, $1\leq  p\leq 2,$ admits a one parameter family of symplectic forms, which coincide with the Kirillov-Kostant-Souriou symplectic forms of $\mathcal{O}_{(0, \gamma)}$ after the identification
        \[
        \begin{array}{lll}
            \mathrm{Gr}_{\res,p}(\mathcal{H}) & \longrightarrow & \mathcal{O}_{(0, \gamma)}\\
             W & \longmapsto & \gamma \left(i(\mathrm{pr}_{W} - \mathrm{pr}_{{W}^\perp}) - i(\mathrm {pr}_+ - \mathrm{pr}_-)\right).
        \end{array}
        \]
        where $\mathrm {pr}_{W}$ $($resp. $\mathrm{pr}_{{W}^\perp})$ is the orthogonal projection onto $W$ $($resp. $W^\perp)$.
    \end{corollary}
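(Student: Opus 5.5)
The plan is to transport the KKS form $\omega$ of Theorem~\ref{KKS form} to $\mathrm{Gr}_{\res,p}(\mathcal H)$ via $\Phi_\gamma$, and to show that the pulled-back forms form the one-parameter family $\gamma\mapsto -2\gamma\,\Omega$. Here $\Omega$ is the homogeneous form of Theorem~\ref{homogeneous form}.

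\textbf{Step 1: equivariance of $\Phi_\gamma$.} Write $W=g(\mathcal H_+)$ with $g\in\operatorname{U}_{\res,p}(\Hc)$, using Proposition~\ref{homogeneous}. Then $\mathrm{pr}_W=g\,\mathrm{pr}_+g^{-1}$ and $\mathrm{pr}_{W^\perp}=g\,\mathrm{pr}_-g^{-1}$. Hence $i(\mathrm{pr}_W-\mathrm{pr}_{W^\perp})=gdg^{-1}$, and so
\[
\Phi_\gamma(W)=\gamma(gdg^{-1}-d)=\gamma\sigma(g).
\]
This is the image of $(0,\gamma)$ under the affine action of $g$ (Proposition~\ref{affine_coadjoint}), up to the sign convention in $\pm\gamma\sigma(g)$. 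If the signs clash, I would replace $\gamma$ by $-\gamma$; this is harmless because the family is indexed by all $\gamma\neq 0$.

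The main step is to check that $\Phi_\gamma$ is equivariant, i.e. $\Phi_\gamma(hW)=h\cdot\Phi_\gamma(W)$. This follows from the cocycle identity $\sigma(hg)=\Ad^*_{h^{-1}}\sigma(g)+\sigma(h)$. Consequently $\Phi_\gamma$ coincides with the diffeomorphism $\operatorname{U}_{\res,p}(\Hc)/(\operatorname{U}(\Hc_+)\times\operatorname{U}(\Hc_-))\to\mathcal O_{(0,\gamma)}$ of the previous theorem. It is well defined by Propositions~\ref{isotropy} and~\ref{projection grassmannian}.

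\textbf{Step 2: pull back $\omega$.} Since $\Phi_\gamma$ is equivariant and $\omega$ is $\operatorname{U}_{\res,p}(\Hc)$-invariant, the form $\Phi_\gamma^*\omega$ is an invariant $2$-form on the Grassmannian. An invariant form is determined by its value at $[\mathrm{id}]=\mathcal H_+$. Differentiating $g\mapsto\gamma\sigma(g)$ at $\mathrm{id}$ in the direction $A$ gives $\gamma[A,d]$. This equals $X^{(A,a)}$ up to the same sign, so $T_{[\mathrm{id}]}\Phi_\gamma([A])=X^{(A,a)}$. Therefore
\[
(\Phi_\gamma^*\omega)_{[\mathrm{id}]}([A],[B])=\gamma\, s(A,B)=-2\gamma\,\Omega_{[\mathrm{id}]}([A],[B]),
\]
by the formula in Theorem~\ref{homogeneous form}.

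\textbf{Step 3: conclude.} Both $\Phi_\gamma^*\omega$ and $-2\gamma\Omega$ are invariant and agree at one point, so they agree everywhere. Thus $\{-2\gamma\Omega\}_{\gamma\neq0}$ is a one-parameter family of weak symplectic forms on $\mathrm{Gr}_{\res,p}(\mathcal H)$, each equal to the KKS form of $\mathcal O_{(0,\gamma)}$ under $\Phi_\gamma$.

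The main obstacle is sign bookkeeping: keeping the convention for $\sigma$ and the infinitesimal generators $X^{(A,a)}$ consistent, and confirming that $s$ vanishes on the isotropy algebra so that it descends to classes $[A]$. The latter holds because $s$ only involves off-diagonal blocks.
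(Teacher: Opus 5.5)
Your proposal is correct and follows the paper's route, made explicit. The paper's one-line proof just transports the KKS forms along the identification $\Phi_\gamma$ of the preceding theorem, justified via the projection description of $\mathrm{Gr}_{\res,p}(\mathcal H)$; your equivariance argument and base-point computation giving $-2\gamma\,\Omega$ supply what the paper leaves implicit or defers to its final remark on proportionality. Your sign caveat is warranted and should be stated as a fact: with the affine action of Proposition~\ref{affine_coadjoint}, $\Phi_\gamma(g\mathcal H_+)=\gamma\sigma(g)$ whereas $\mathcal O_{(0,\gamma)}=\{(-\gamma\sigma(g),\gamma)\}$, so the formula as written lands in $\mathcal O_{(0,-\gamma)}$ and the reindexing $\gamma\mapsto-\gamma$ is genuinely needed.
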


    \begin{proof}
        This is a consequence of definition of the restricted Grassmannian $\mathrm {Gr}_{\res, p}(\mathcal{H})$ using orthogonal projections.
    \end{proof}

\begin{remark}
        {\rm The homogeneous symplectic form defined in Theorem \ref{homogeneous form} is proportional to the KKS-form from Theorem \ref{KKS form} after the identification of the restricted $p$-Schatten class Grassmannian with the affine coadjoint orbit.} 
\end{remark}

\end{document}